\newtheorem{thm}{Theorem}[section]
\newtheorem{cor}[thm]{Corollary}
\newtheorem{lem}[thm]{Lemma}
\newtheorem{prop}[thm]{Proposition}
\theoremstyle{definition}
\theoremstyle{remark}
\newtheorem{exam}[thm]{Example}
\numberwithin{equation}{section}
\newcommand{\R}{\mathbb{R}}
\newcommand{\norm}[1]{\left\Vert#1\right\Vert}
\newcommand{\abs}[1]{\left\vert#1\right\vert}
\newcommand{\set}[1]{\left\{#1\right\}}
\newcommand{\bfx}{\bm{x}}
\newcommand{\bfzero}{\bm{0}}
\newcommand{\bfinfty}{\bm{\infty}}
\newcommand{\bfe}{\bm{e}}
\newcommand{\bfs}{\bm{s}}
\newcommand{\bfU}{\bm{U}}
\newcommand{\bfu}{\bm{u}}
\newcommand{\bfX}{\bm{X}}
\newcommand{\bfy}{\bm{y}}
\newcommand{\bfZ}{\bm{Z}}
\begin{document}

\title[Conditional Distribution of Exceedance Counts]{Asymptotic Conditional Distribution of Exceedance Counts: Fragility Index with Different Margins}%
\author{Michael Falk\and Diana Tichy}%
\address{University of Wuerzburg, Institute of Mathematics, Emil-Fischer-Str. 30,
97074 W\"{u}rzburg, Germany}%
\email{falk@mathematik.uni-wuerzburg.de, d.tichy@mathematik.uni-wuerzburg.de}%

\subjclass{60G70, 62G32}%
\keywords{Exceedance over high
threshold\and
 Fragility index\and Extended fragility index\and Multivariate extreme value theory\and Peaks-over-threshold approach\and Copula \and Exceedance cluster length}%

\begin{abstract}
Let $\bm X=(X_1,\dots,X_d)$ be a random vector, whose components
are not necessarily independent  nor are they required to have
identical distribution functions $F_1,\dots,F_d$. Denote by $N_s$
the number of exceedances among $X_1,\dots,X_d$ above a high
threshold $s$. The fragility index, defined by
$FI=\lim_{s\nearrow}E(N_s\mid N_s>0)$ if this limit exists,
measures the asymptotic stability of the stochastic system $\bm X$
as the threshold increases. The system is called stable if $FI=1$
and fragile otherwise. In this paper we show that the asymptotic
conditional distribution of exceedance counts (ACDEC)
$p_k=\lim_{s\nearrow}P(N_s=k\mid N_s>0)$, $1\le k\le d$, exists,
if the copula of $\bm X$ is in the domain of attraction of a
multivariate extreme value distribution, and if
$\lim_{s\nearrow}(1-F_i(s))/(1-F_\kappa(s))=\gamma_i\in[0,\infty)$
exists for $1\le i\le d$ and some
$\kappa\in\left\{1,\dots,d\right\}$. This enables the computation
of the FI corresponding to $\bm X$ and of the extended FI  as well
as of the asymptotic distribution of the exceedance cluster length
also in that case, where the components of $\bm X$ are not
identically distributed. 
\end{abstract}
\maketitle

\section{Introduction}\label{sec:introduction}

Let $\bfX=(X_1,\dots,X_d)$ be a random vector (rv), whose
components are  identically distributed but not necessarily
independent. The number of exceedances among $X_1,\dots,X_d$ above
the threshold $s$ is denoted by $N_s:=\sum_{i=1}^d
1_{(s,\infty)}(X_i)$. The fragility index (FI) corresponding to
$\bfX$ is the asymptotic conditional expected number of
exceedances, given that there is at least one exceedance, i.e., $
FI=\lim_{s\nearrow} E(N_s\mid N_s>0)$. The FI was introduced in
\cite{geluk07} to measure the stability of the stochastic system
$\set{X_1,\dots,X_d}$. The system is called \textit{stable} if $FI
=1$, otherwise it is called \textit{fragile}.

In the $2$-dimensional case, the FI is directly linked to the
upper  tail dependence coefficient $\lambda^{up}:=\lim_{t
\downarrow 0} P(X_2>F_2^{-1}(1-t)\mid  X_1>F_1^{-1}(1-t))$, which
goes back to \cite{gef58, gef59}  and \cite{sib60}. We have
$FI=2/(2-\lambda^{up})$, provided the df $F_1$, $F_2$ of $X_1$,
$X_2$ are continuous and $\lambda^{up}$ exists. In contrast to the
upper tail dependence coefficient, the FI presents a measure for
tail dependence in an arbitrary dimensions.

In \cite{falktichy10a} the asymptotic conditional  distribution
$p_k:=\lim_{s\nearrow}$ $ P(N_s=k\mid N_s>0)$ of the number of
exceedances was investigated, given that there is at least one
exceedance, $1\le k\le d$.

It turned out that this \textit{asymptotic conditional
distribution of exceedance counts} (ACDEC) exists, if the copula
$C$ corresponding to $\bfX$ is in the domain of attraction of a
(multivariate) extreme value distribution (EVD) $G$, denoted by
$C\in D(G)$, i.e. $
C^n\left(\left(1+\frac{x_1}n,\dots,1+\frac{x_d}n\right)\right)\to_{n\to\infty}G(\bfx),$
$\bfx\le\bfzero\in\R^d.$

In this paper we  investigate the ACDEC, dropping the assumption
that the margins    $X_i$, $1\le i\le d$, are identically
distributed. This will be done in Section \ref{sec:acdec}.
 If the ACDEC exists  then the FI exists and we have in
 particular $FI=\sum_{k=1}^d kp_k$.
In Section \ref{sec:fragility_index} we will compute the FI under
quite general conditions on $\bfX$.

The extended fragility index $FI(m)$ is the extension of the
$FI=FI(1)$ through the condition that there  are at least $m\ge 1$
exceedances, i.e.,
\[
FI(m)=\lim_{s\nearrow} E(N_s\mid N_s\ge m) =\frac{\sum_{k=m}^d kp_k}{\sum_{k=m}^d p_k},
\]
if the ACDEC exists. But now we encounter  the problem that the
denominator in the definition of $FI(m)$ may vanish: $\sum_{k=m}^d
p_k=0$. In Section \ref{sec:extended_fragility_index} we will
establish a characterization of $\sum_{k=m}^d p_k=0$ in terms of
tools from multivariate extreme value theory.

The total number of sequential time points at which a stochastic
process exceeds a high threshold is an \textit{exceedance cluster
length}. The asymptotic distribution as the threshold increases of
the remaining exceedance cluster length,  conditional on the
assumption that there is an exceedance at index
$\kappa\in\set{1,\dots,d}$, will be computed for
$\bfX=(X_1,\dots,X_d)$ in Section \ref{sec:sojourn_times}. It
turns out that this can be expressed in terms of the minimum of a
\textit{generator} of the $D$-norm, cf equation
\eqref{eqn:representation_of_D-norm_via_generator}.

\section{ACDEC}\label{sec:acdec}

By Sklar's Theorem (see, for example, \cite[Theorem
2.10.9]{nelsen06}) we can assume the representation
$(X_1,\dots,X_d)=(F_1^{-1}(U_1),\dots,F_d^{-1}(U_d))$, where $F_i$
is the (univariate) distribution function (df) of $X_i$, $1\le
i\le d$, and the rv $\bfU=(U_1,\dots,U_d)$ follows a copula on
$\R^d$, i.e., each $U_i$ is uniformly on $(0,1)$ distributed,
$1\le i\le d$. By $F^{-1}(q):=\inf\set{t\in\R:\,F(t)\ge q}$,
$q\in(0,1)$, we denote the generalized inverse of a df $F$.

The following condition is crucial for the present paper. It substitutes the condition of equal margins $F_1=\dots=F_d$ in \cite{falktichy10a}. By $\omega(F):=\sup\{ F^{-1}(q):\,q\in(0,1)\}$ $=\sup\set{t\in\R:\,F(t)<1}$ we denote the upper endpoint of a df $F$.

We require throughout the existence of an index $\kappa\in\set{1,\dots,d}$ with $\omega(F_\kappa)=:\omega^*$, such that
\begin{equation*}
\lim_{s\uparrow \omega^*}\frac{1-F_i(s)}{1-F_\kappa(s)}=\gamma_i\in[0,\infty),\qquad 1\le i\le d.\label{cond:crucial_condition_on_tails}\tag{C}
\end{equation*}
Note that condition \eqref{cond:crucial_condition_on_tails} implies $\omega(F_i)\le \omega^*$ for each $i$, since otherwise we would get $\gamma_i=\infty$, which is excluded. We, thus, have $\omega^*=\max_{i\le d} \omega(F_i)$.

The following result is taken from \cite{aulbayfalk09}. By $\bfe_i$ we denote the $i$-th unit vector in $\R^d$, $1\le i\le d$; all operations on vectors such as $\bfx\le\bfzero\in\R^d$ are meant componentwise.

\begin{prop}\label{prop:equivalences_of_domain}
An arbitrary copula $C$ on $\R^d$ is in the domain of attraction of an EVD $G$ if and only if there exists a norm $\norm{\cdot}_D$ on $\R^d$ with $\norm{\bfe_i}_D=1$, $1\le i\le d$, such that
\begin{equation*}\label{cond:equivalences_of_domain}
C(\bfy)=1-\norm{\bfy-\bm{1}}_D+
o\left(\norm{\bfy-\bm{1}}_D\right),
\end{equation*}
uniformly for $\bfy\in[0,1]^d$. In this case
$G(\bfx)=\exp\left(-\norm{\bfx}_D\right)$, $\bfx\le\bfzero\in\R^d$.
\end{prop}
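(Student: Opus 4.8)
The plan is to prove the two implications separately; the whole content lies in translating between the $n$th-power description of the domain of attraction and the first-order expansion of $1-C$ near $\bfone$, and the implication ``$\Leftarrow$'' is the cheap one.

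For sufficiency I would just substitute. Given the expansion for a norm $\norm\cdot_D$ with $\norm{\bfe_i}_D=1$, fix $\bfx\le\bfzero$; for $n$ large $\bfone+\bfx/n\in[0,1]^d$ and $C(\bfone+\bfx/n)=1-\norm{\bfx}_D/n+o(1/n)$, so
\[
C^n\!\left(\bfone+\tfrac{\bfx}{n}\right)=\left(1-\tfrac{\norm{\bfx}_D}{n}+o\!\left(\tfrac1n\right)\right)^{n}\To \exp(-\norm{\bfx}_D)=:G(\bfx).
\]
Each map $\bfx\mapsto C^n(\bfone+\bfx/n)$ is a df (namely that of $(n(\max_{j\le n}V_{j,i}-1))_{i\le d}$ for i.i.d.\ $\bfV_j\sim C$) whose marginals converge to the proper dfs $x_i\mapsto e^{x_i}$; hence the pointwise limit $G$ is again a df, and $G^m(\bfx/m)=G(\bfx)$ makes it max-stable, i.e.\ an EVD. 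Thus $C\in D(G)$ with $G=\exp(-\norm\cdot_D)$ — and, as a by-product, $\norm\cdot_D$ is forced to be a genuine $D$-norm, so no separate check of the rectangle inequalities is needed.

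For necessity, assume $C\in D(G)$. First I would pin down the margins: evaluating $C^n(\bfone+\bfx/n)$ with all but one coordinate set to $1$ and using $C(1,\dots,y_i,\dots,1)=y_i$ gives $G_i(x_i)=e^{x_i}$, so $G$ is a $d$-variate EVD with standard negative exponential margins; by the classical representation of such EVDs one writes $-\log G(\bfx)=\norm{\bfx}_D$ for a norm $\norm\cdot_D$ on $\R^d$ with $\norm{\bfe_i}_D=1$ (homogeneity of degree one from max-stability of $G$, the triangle inequality from the rectangle inequalities of the df $G$, positivity from $\norm\cdot_D\ge\norm\cdot_\infty$, extended off $\set{\bfx\le\bfzero}$ via absolute values). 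The real work is the uniform expansion. Since $C(\bfone+\bfx/n)\ge 1+n^{-1}\sum_i x_i\to 1$ by the Fr\'echet--Hoeffding lower bound and $C^n(\bfone+\bfx/n)\to G(\bfx)>0$, taking logarithms yields $n\bigl(1-C(\bfone+\bfx/n)\bigr)\to\norm{\bfx}_D$ for each $\bfx\le\bfzero$. I would then sandwich a continuous $t$ between $1/(n+1)$ and $1/n$ and use that $t\mapsto 1-C(\bfone+t\bfx)$ is nondecreasing (as $\bfx\le\bfzero$) to get $h_t(\bfu):=\bigl(1-C(\bfone+t\bfu)\bigr)/t\to\norm{\bfu}_D$ as $t\downarrow 0$, pointwise on the compact set $S:=\set{\bfu\in[-1,0]^d:\norm{\bfu}_\infty=1}$. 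Finally, each $h_t$ is monotone in every coordinate of $\bfu$ and the limit $\norm\cdot_D$ is continuous on $S$, so a Dini/P\'olya-type covering argument (bound $h_t$ on a small box by its values at the two extreme corners, then combine uniform continuity of $\norm\cdot_D$ with convergence at the finitely many corners) upgrades the convergence to uniform convergence on $S$; rescaling $\bfy-\bfone=s\bfu$ with $s=\norm{\bfy-\bfone}_\infty$ and $\bfu\in S$, and using $\norm\cdot_\infty\le\norm\cdot_D\le c\,\norm\cdot_\infty$, then turns this into $C(\bfy)=1-\norm{\bfy-\bfone}_D+o(\norm{\bfy-\bfone}_D)$ uniformly for $\bfy\in[0,1]^d$, with $G=\exp(-\norm\cdot_D)$.

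I expect the last step of the necessity part — promoting pointwise convergence of $\bigl(1-C(\bfone+t\bfu)\bigr)/t$ to convergence uniform over the direction $\bfu$ — to be the main obstacle. It is precisely here that the coordinatewise monotonicity of the copula $C$ is indispensable: it is what makes the Dini/P\'olya upgrade work and, in the end, what buys uniformity over all of $[0,1]^d$ rather than only along the sequence $\set{\bfone+\bfx/n}$.
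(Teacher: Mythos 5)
The paper does not prove this proposition at all: it is quoted from Aulbach, Bayer and Falk (2011), so there is no in-paper argument to compare yours with. Judged on its own, your proof is correct and follows the standard route of that literature. The sufficiency half (substitute $\bfy=\bfone+\bfx/n$, raise to the $n$-th power, use that the margins $(1+x_i/n)^n\to e^{x_i}$ are proper dfs to conclude by tightness that the pointwise limit is a df, and use homogeneity of the norm for max-stability) is fine, and it correctly delivers as a by-product that $\norm{\cdot}_D$ must then be a genuine $D$-norm. The necessity half is also sound in its essential steps: the margins of $G$ are $e^{x_i}$; the Fr\'echet--Hoeffding bound gives $C(\bfone+\bfx/n)\to 1$ and $G(\bfx)>0$, so taking logarithms yields $n\bigl(1-C(\bfone+\bfx/n)\bigr)\to\norm{\bfx}_D$; coordinatewise monotonicity of $C$ interpolates from $1/n$ to continuous $t$; and the P\'olya/Dini covering argument (monotone $h_t$, continuous limit, finitely many box corners) legitimately upgrades pointwise to uniform convergence in the direction $\bfu$, after which rescaling with $\norm{\cdot}_\infty\le\norm{\cdot}_D$ gives the uniform expansion.

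One remark in your necessity step is inaccurate, though it does not damage the proof: the parenthetical claim that the triangle inequality of $\norm{\cdot}_D$ follows ``from the rectangle inequalities of the df $G$.'' The $\Delta$-monotonicity of $G$ does not directly yield subadditivity of $-\log G$; for instance, the exponent-measure sets $[-\bfinfty,\bfx+\bfy]^\complement$ and $[-\bfinfty,\bfx]^\complement\cup[-\bfinfty,\bfy]^\complement$ are not nested in the direction that a naive subadditivity argument would require. Subadditivity (equivalently, convexity together with homogeneity) of $-\log G$ is obtained from the Pickands--de Haan--Resnick spectral representation, or from a generator representation $\norm{\bfx}_D=E\left(\max_{1\le i\le d}\abs{x_i}Z_i\right)$ --- which is exactly the ``classical representation'' you invoke anyway, so your appeal to that representation should carry the whole norm property, and the parenthetical sketch should be dropped or corrected.
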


The following result is an immediate consequence of Proposition \ref{prop:equivalences_of_domain} and the equivalence $F^{-1}(q)\le t \iff q\le F(t)$, $q\in(0,1)$, $t\in\R$, which holds for an arbitrary df $F$.

\begin{cor}\label{cor:expansion_of_P(X_k_le s)}
Suppose that the copula $C$ corresponding to the rv $\bfX$ is in the domain of attraction of an EVD $G$ and that condition \eqref{cond:crucial_condition_on_tails} is satisfied. Then there exists a norm $\norm\cdot_D$ on $\R^d$ with $\norm{\bfe_i}_D=1$, $1\le i\le d$, such that for any nonempty index set $K\subset\set{1,\dots,d}$
\[
P(X_k\le s,\,k\in K)=1-(1-F_\kappa(s))\norm{\sum_{k\in K}\gamma_k\bfe_k}_D+o(1-F_\kappa(s))
\]
as $s\uparrow\omega^*$.
\end{cor}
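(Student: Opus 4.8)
The plan is to reduce the joint probability $P(X_k\le s,\,k\in K)$ to the copula $C$ evaluated at a point $\bfy(s)\in[0,1]^d$ that approaches $\bm 1$, and then apply the expansion in Proposition \ref{prop:equivalences_of_domain}. First I would use the equivalence $F_k^{-1}(q)\le t\iff q\le F_k(t)$ together with the representation $X_k=F_k^{-1}(U_k)$ to write, for $k\in K$,
\[
P(X_k\le s,\,k\in K)=P\bigl(U_k\le F_k(s),\,k\in K\bigr)=C\bigl(\bfy(s)\bigr),
\]
where $\bfy(s)$ has $k$-th coordinate $F_k(s)$ for $k\in K$ and $1$ otherwise; here I use that the remaining coordinates of a copula may be set to $1$ without changing the probability. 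As $s\uparrow\omega^*$ each $F_k(s)\to 1$ (this is where condition \eqref{cond:crucial_condition_on_tails} is used, via the observation $\omega(F_k)\le\omega^*$ recorded after (C)), so $\bfy(s)\to\bm 1$ and Proposition \ref{prop:equivalences_of_domain} applies with the $D$-norm furnished by $C\in D(G)$.

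Next I would substitute into the expansion
$C(\bfy)=1-\norm{\bfy-\bm 1}_D+o(\norm{\bfy-\bm 1}_D)$. The vector $\bfy(s)-\bm 1$ has $k$-th coordinate $-(1-F_k(s))$ for $k\in K$ and $0$ otherwise, i.e.
\[
\bfy(s)-\bm 1=-(1-F_\kappa(s))\sum_{k\in K}\frac{1-F_k(s)}{1-F_\kappa(s)}\,\bfe_k.
\]
By homogeneity and continuity of the norm $\norm\cdot_D$, and by condition \eqref{cond:crucial_condition_on_tails} which gives $(1-F_k(s))/(1-F_\kappa(s))\to\gamma_k$, we obtain
\[
\norm{\bfy(s)-\bm 1}_D=(1-F_\kappa(s))\,\norm{\sum_{k\in K}\frac{1-F_k(s)}{1-F_\kappa(s)}\,\bfe_k}_D
=(1-F_\kappa(s))\Bigl(\norm{\sum_{k\in K}\gamma_k\bfe_k}_D+o(1)\Bigr),
\]
so that $\norm{\bfy(s)-\bm 1}_D=O(1-F_\kappa(s))$ and the $o(\norm{\bfy(s)-\bm 1}_D)$ term is $o(1-F_\kappa(s))$. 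Collecting terms yields exactly the claimed asymptotic expansion.

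The only point requiring a little care — and the main (minor) obstacle — is the case $\sum_{k\in K}\gamma_k\bfe_k=\bm 0$, which happens precisely when $\gamma_k=0$ for all $k\in K$; then the leading term vanishes and the statement reduces to $P(X_k\le s,\,k\in K)=1+o(1-F_\kappa(s))$. This still follows from the display above: one has $\norm{\bfy(s)-\bm 1}_D=o(1-F_\kappa(s))$ directly, since each coordinate $1-F_k(s)$ is $o(1-F_\kappa(s))$, so the expansion of $C$ gives $C(\bfy(s))=1+o(1-F_\kappa(s))$ with no contradiction. Note also that $\kappa\in\set{1,\dots,d}$ need not lie in $K$; the normalization is always by $1-F_\kappa(s)$, and since $\gamma_\kappa=1$ this is a genuine (nonvanishing) normalizing sequence. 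Uniformity of the $o(\cdot)$ in Proposition \ref{prop:equivalences_of_domain} over $[0,1]^d$ ensures the error estimate holds along the specific path $\bfy(s)$, so no separate argument for that is needed.
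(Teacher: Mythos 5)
Your argument is correct and is essentially the paper's own: the authors state the corollary as an immediate consequence of Proposition \ref{prop:equivalences_of_domain} and the equivalence $F^{-1}(q)\le t\iff q\le F(t)$, which is exactly your reduction $P(X_k\le s,\,k\in K)=C(\bfy(s))$ followed by homogeneity and condition \eqref{cond:crucial_condition_on_tails}. Your extra remarks (the degenerate case $\gamma_k=0$ for all $k\in K$, and the uniformity of the $o$-term) are fine but not needed beyond what the paper implicitly uses.
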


The following result provides the asymptotic unconditional distribution of exceedance counts.

\begin{lem}\label{lem:unconditional_acdec}
Under the conditions of Corollary \ref{cor:expansion_of_P(X_k_le s)} we obtain with $c:=1-F_\kappa(s)$
\begin{align*}
a_k&:=\lim_{s\uparrow\omega^*}\frac{P(N_s=k)}c\\
&=\sum_{0\le j\le k}(-1)^{k-j+1}\binom{d-j}{k-j}\sum_{\emptyset\not=T\subset\set{1,\dots,d} \atop \abs{T}=d-j}\norm{\sum_{i\in T}\gamma_i\bfe_i}_D
\end{align*}
for $1\le k\le d$, and
\[
a_0:=\lim_{s\uparrow\omega^*}\frac{1-P(N_s=0)}c= \norm{\sum_{j=1}^d\gamma_j\bfe_j}_D.
\]
\end{lem}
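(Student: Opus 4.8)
The plan is to count non-exceedances rather than exceedances, because it is the probabilities $P(X_i\le s,\,i\in J)$ that Corollary~\ref{cor:expansion_of_P(X_k_le s)} expands directly. Put $M_s:=d-N_s=\#\set{i\in\set{1,\dots,d}:\,X_i\le s}$, so that $\set{N_s=k}=\set{M_s=d-k}$. By the inclusion-exclusion (sieve) formula for the probability that exactly $m$ of the events $\set{X_i\le s}$, $1\le i\le d$, occur,
\[
P(M_s=m)=\sum_{j=m}^d(-1)^{j-m}\binom jm\,S_j(s),\qquad S_j(s):=\sum_{J\subset\set{1,\dots,d},\,\abs J=j}P(X_i\le s,\,i\in J),
\]
with the convention $S_0(s):=1$.

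Next I would substitute the expansion from Corollary~\ref{cor:expansion_of_P(X_k_le s)}: for nonempty $J$ one has $P(X_i\le s,\,i\in J)=1-c\norm{\sum_{i\in J}\gamma_i\bfe_i}_D+o(c)$ as $s\uparrow\omega^*$, hence $S_j(s)=\binom dj-c\,\Sigma_j+o(c)$ for $1\le j\le d$ (and for $j=0$ upon setting $\Sigma_0:=0$), where $\Sigma_j:=\sum_{\emptyset\not=T\subset\set{1,\dots,d},\,\abs T=j}\norm{\sum_{i\in T}\gamma_i\bfe_i}_D$; there being only finitely many terms, the $o(c)$-errors are harmless. Plugging in and using $\binom jm\binom dj=\binom dm\binom{d-m}{j-m}$,
\[
P(M_s=m)=\binom dm\sum_{j=m}^d(-1)^{j-m}\binom{d-m}{j-m}-c\sum_{j=m}^d(-1)^{j-m}\binom jm\,\Sigma_j+o(c),
\]
and the first sum equals $(1-1)^{d-m}$, which vanishes for $m<d$. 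Taking $m=d-k$ with $1\le k\le d$ then gives $a_k=\sum_{j=d-k}^d(-1)^{j-d+k+1}\binom j{d-k}\,\Sigma_j$, and the substitution $j\mapsto d-j$ converts this into the asserted formula. The case $a_0$ needs no sieve: by Corollary~\ref{cor:expansion_of_P(X_k_le s)} with $K=\set{1,\dots,d}$, $P(N_s=0)=P(X_i\le s,\,1\le i\le d)=1-c\norm{\sum_{i=1}^d\gamma_i\bfe_i}_D+o(c)$, so $(1-P(N_s=0))/c\to\norm{\sum_{j=1}^d\gamma_j\bfe_j}_D$.

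The only real work is the combinatorial bookkeeping, and it stays elementary precisely because of the passage to $M_s$: a single application of inclusion-exclusion suffices, and the only identities invoked are $\binom jm\binom dj=\binom dm\binom{d-m}{j-m}$ and $\sum_{i=0}^{d-m}(-1)^i\binom{d-m}i=0$ for $m<d$. Attacking $\set{N_s=k}$ head-on would instead require a nested inclusion-exclusion together with the evaluation of alternating binomial sums such as $\sum_i(-1)^i\binom mi\binom{i+\ell}{k}$, which this route sidesteps; I expect this bookkeeping, rather than any analytic point, to be the main thing to get right.
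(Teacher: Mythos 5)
Your proposal is correct, and it reaches the lemma by a genuinely different decomposition than the paper. The paper attacks $\set{N_s=k}$ head-on: it sums over the exceedance set $S$ with $\abs S=k$, applies inclusion--exclusion to the events $\set{X_i\le s}$, $i\in S$, conditionally on $\set{X_j\le s,\,j\in S^\complement}$, inserts Corollary \ref{cor:expansion_of_P(X_k_le s)}, and then performs a nested double-sum index transformation over pairs $(S,T)$ to collect the terms by $\abs{T\cup S^\complement}=d-j$. You instead pass to the non-exceedance count $M_s=d-N_s$ and apply the classical Jordan sieve for ``exactly $m$ events occur'' once, so the zeroth-order terms cancel via $\binom jm\binom dj=\binom dm\binom{d-m}{j-m}$ and $\sum_{i=0}^{d-m}(-1)^i\binom{d-m}{i}=0$ (valid since $m=d-k<d$ when $k\ge1$), and a single substitution $j\mapsto d-j$ yields exactly the stated formula; the edge case $k=d$, $j=k$ (empty $T$) is handled by your convention $\Sigma_0=0$, consistent with the paper's restriction to nonempty $T$. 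The analytic content is identical in both routes: Corollary \ref{cor:expansion_of_P(X_k_le s)} applied to finitely many index sets, so the $o(c)$ errors aggregate harmlessly, and the $a_0$ computation is the same. What your route buys is a cleaner combinatorial bookkeeping (one standard sieve identity instead of a conditional inclusion--exclusion followed by a ``suitable index transformation''), at the mild cost of invoking the Jordan sieve as a known identity rather than deriving everything from the additivity formula as the paper does.
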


\begin{proof}
Corollary \ref{cor:expansion_of_P(X_k_le s)} implies
\[
P(N_s=0)=1-c\norm{\sum_{j=1}^d\gamma_j\bfe_j}_D+o(c),
\]
for $s\uparrow \omega^*$.

From the additivity formula, Corollary \ref{cor:expansion_of_P(X_k_le s)} and the equality $\sum_{\emptyset\not= T\subset S}(-1)^{\abs{T}+1}=1$ for any nonempty subset $S\subset\set{1,\dots,d}$, we obtain for $1\le k\le d$ as $s\uparrow\omega^*$\allowdisplaybreaks[4]
\begin{align*}
&P(N_s=k)\\
&=\sum_{S\subset\set{1,\dots,d}\atop \abs{S}=k} P\left(X_i>s,\,i\in S,\,X_j\le s,\,j\in S^\complement\right)\\
&=\sum_{S\subset\set{1,\dots,d}\atop \abs{S}=k} P\left(X_i>s,\,i\in S \mid X_j\le s,\,j\in S^\complement\right) P\left(X_j\le s,\,j\in S^\complement\right)\\
&=\sum_{S\subset\set{1,\dots,d}\atop \abs{S}=k} \left(1-\sum_{\emptyset\not= T\subset S}(-1)^{\abs{T}+1} P\left(X_i\le s,\,i\in T \mid X_j\le s,\,j\in S^\complement\right)\right)\\
&\hspace*{5cm}\times P\left(X_j\le s,\,j\in S^\complement\right)\\
&=\sum_{S\subset\set{1,\dots,d}\atop \abs{S}=k}\left(P\left(X_j\le s,\,j\in S^\complement\right)- \sum_{\emptyset\not= T\subset S}(-1)^{\abs{T}+1} P\left(X_i\le s,\,i\in T \cup S^\complement\right)\right)\\
&=\sum_{S\subset\set{1,\dots,d}\atop \abs{S}=k}\left(1-c\norm{\sum_{j\in S^\complement}\gamma_j\bfe_j}_D - \sum_{\emptyset\not= T\subset S}(-1)^{\abs{T}+1} \left(1-c \norm{\sum_{j\in T\cup S^\complement}\gamma_j\bfe_j}_D\right)\right)\\
&\hspace*{7cm}+ o(c) \\
&= c \sum_{S\subset\set{1,\dots,d}\atop \abs{S}=k} \sum_{T\subset S}(-1)^{\abs{T}+1} \norm{\sum_{j\in T\cup S^\complement}\gamma_j\bfe_j}_D + o(c).
\end{align*}

With a suitable index transformation we get
\begin{align*}
P(N_s=k)&=c \sum_{S\subset\set{1,\dots,d}\atop \abs{S}=k}\sum_{0\le r\le \abs S}(-1)^{r+1}\sum_{K\subset S\atop \abs K=r} \norm{\sum_{i\in K\cup S^\complement:=T\atop \abs T=r+d-k}\gamma_i\bfe_i}_D + o(c)\\
&=c \sum_{0\le j \le k}(-1)^{k-j+1} \binom{d-j}{k-j}
\sum_{T\subset\set{1,\dots,d}\atop \abs T=d-j} \norm{\sum_{j\in
T}\gamma_j\bfe_j}_D + o(c),
\end{align*}
which completes the proof of Lemma \ref{lem:unconditional_acdec}.
\end{proof}

Note that $a_0>0$ as $\gamma_k=1$ and that $a_k\ge 0$, $1\le k\le d$, in Lemma \ref{lem:unconditional_acdec}. The following main result of this section is, therefore, an immediate consequence of Lemma \ref{lem:unconditional_acdec}. It provides the ACDEC also in the case, where the components $X_i$ of the rv $\bfX=(X_1,\dots,X_d)$ are not identically distributed.

\begin{thm}[ACDEC]\label{th:acdec}
Under the conditions of Corollary \ref{cor:expansion_of_P(X_k_le s)} we have that the limits
\[
p_k:=\lim_{s\uparrow \omega^*}P(N_s=k\mid N_s>0)=\frac{a_k}{a_0},\qquad 1\le k\le d,
\]
exist and that they define a probability distribution on $\set{1,\dots,d}$.
\end{thm}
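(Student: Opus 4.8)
The plan is to deduce the statement directly from Lemma \ref{lem:unconditional_acdec}, writing $p_k$ as a ratio of the two quantities whose limits that lemma already provides. Fix $s<\omega^*$, so that $c:=1-F_\kappa(s)>0$. Since $\set{X_\kappa>s}\subset\set{N_s>0}$ we have $P(N_s>0)\ge P(X_\kappa>s)=c>0$, so conditioning on $\set{N_s>0}$ is legitimate, and for $1\le k\le d$
\[
P(N_s=k\mid N_s>0)=\frac{P(N_s=k)}{1-P(N_s=0)}=\frac{P(N_s=k)/c}{\bigl(1-P(N_s=0)\bigr)/c}.
\]
By Lemma \ref{lem:unconditional_acdec} the numerator on the right converges to $a_k$ and the denominator to $a_0$ as $s\uparrow\omega^*$.

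Next I would check that $a_0>0$, which is what makes the limiting ratio well defined. Dividing the inequality $1-P(N_s=0)=P(N_s>0)\ge c$ by $c$ and letting $s\uparrow\omega^*$ gives $a_0\ge 1$; equivalently, condition \eqref{cond:crucial_condition_on_tails} with $i=\kappa$ forces $\gamma_\kappa=1$, so by monotonicity of the $D$-norm on the positive orthant $a_0=\norm{\sum_{j=1}^d\gamma_j\bfe_j}_D\ge\norm{\bfe_\kappa}_D=1$. Hence $p_k=\lim_{s\uparrow\omega^*}P(N_s=k\mid N_s>0)$ exists and equals $a_k/a_0$ for every $1\le k\le d$.

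It then remains to verify that $(p_k)_{1\le k\le d}$ is a probability distribution on $\set{1,\dots,d}$. Nonnegativity is immediate: $a_k=\lim_{s\uparrow\omega^*}P(N_s=k)/c\ge 0$, so $p_k\ge 0$. For the total mass, observe that $N_s$ takes values in $\set{0,1,\dots,d}$, hence $\sum_{k=1}^d P(N_s=k)=P(N_s>0)=1-P(N_s=0)$; dividing by $c$ and passing to the limit yields $\sum_{k=1}^d a_k=a_0$, so that $\sum_{k=1}^d p_k=1$.

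There is no genuine obstacle here: the theorem is a bookkeeping consequence of Lemma \ref{lem:unconditional_acdec}. The only two points that need a word of justification rather than a computation are the strict positivity $a_0>0$, so that the quotient is meaningful, and the normalization $\sum_{k=1}^d a_k=a_0$; both are handled above by elementary probabilistic identities, although one could alternatively extract $\sum_{k=1}^d a_k=a_0$ from the closed-form expressions for the $a_k$ in Lemma \ref{lem:unconditional_acdec} by a standard binomial summation.
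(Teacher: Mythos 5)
Your proposal is correct and follows essentially the same route as the paper, which also deduces the theorem directly from Lemma \ref{lem:unconditional_acdec} after noting that $a_k\ge 0$ and that $a_0>0$ because $\gamma_\kappa=1$. You merely spell out the details (the ratio representation, $a_0\ge 1$, and $\sum_{k=1}^d a_k=a_0$) that the paper leaves as ``immediate''.
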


For the usual $\lambda$-norm $\norm{\bfx}_\lambda=\left(\sum_{1\le
i\le d}\abs{x_i}^\lambda\right)^{1/\lambda}$,  $\bfx\in\R^d$,
$\lambda\in[1,\infty)$, we obtain, for example,
$a_0=\left(\sum_{1\le i\le d}\gamma_i^\lambda\right)^{1/\lambda}$
and
\[a_k=\sum_{0\le j\le k} (-1)^{k-j+1}
\binom{d-j}{k-j}\sum_{\emptyset\not=T\subset\set{1,\dots,d}\atop
\abs{T}=d-j} \left(\sum_{i\in
T}\gamma_i^\lambda\right)^{1/\lambda},\qquad 2\le k\le d.
\]
For $\lambda=1$, which is the case of independent margins of $G$,
we obtain in particular $a_0=\sum_{1\le i\le d}\gamma_i=a_1$,
$a_k=0$, $2\le k\le d$, and, thus, $p_1=1$, $p_k=0$, $2\le k\le
d$.

\section{The Fragility Index}\label{sec:fragility_index}

The following theorem is the main result of this section.

\begin{thm}\label{th:fragility_index}
Under the conditions of Corollary \ref{cor:expansion_of_P(X_k_le s)} we have
\[
FI=\frac{\sum_{i=1}^d\gamma_i}{\norm{\sum_{i=1}^d\gamma_i\bfe_i}_D}\in[1,d].
\]
\end{thm}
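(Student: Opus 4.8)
The plan is to compute $FI = \sum_{k=1}^d k\,p_k = \frac{1}{a_0}\sum_{k=1}^d k\,a_k$ directly from Lemma \ref{lem:unconditional_acdec}, reducing everything to the evaluation of the combinatorial sum $\sum_{k=1}^d k\,a_k$. First I would substitute the formula for $a_k$ and interchange the order of summation so that the sum over $k$ becomes an inner sum. Grouping terms by the size $d-j = |T|$ of the index set $T$, the coefficient multiplying $\norm{\sum_{i\in T}\gamma_i\bfe_i}_D$ for a fixed $T$ with $|T| = d-j$ becomes $\sum_{k\ge j} (-1)^{k-j+1} k \binom{d-j}{k-j}$, where $k$ ranges over $\max(1,j),\dots,d$; shifting the index via $r = k-j$ turns this into a sum of the form $\sum_{r} (-1)^{r+1}(r+j)\binom{d-j}{r}$.

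The key step is then the evaluation of these alternating binomial sums. Using the standard identities $\sum_{r=0}^{n}(-1)^r\binom{n}{r} = 0$ for $n\ge 1$ (and $=1$ for $n=0$) and $\sum_{r=0}^{n} r(-1)^r\binom{n}{r} = 0$ for $n\ge 2$ (and $=-1$ for $n=1$), one finds that the coefficient of $\norm{\sum_{i\in T}\gamma_i\bfe_i}_D$ vanishes whenever $|T|\le d-2$, equals $1$ when $|T| = d-1$, and — handling the boundary term $j=0$ separately, where the sum over $k$ starts at $k=1$ rather than $k=0$ — contributes the full-index term when $|T| = d$. Collecting the surviving contributions, $\sum_{k=1}^d k\,a_k$ collapses to $\sum_{i=1}^d \norm{\sum_{j\ne i}\gamma_j\bfe_j}_D \cdot 0 + (\text{something}) $; more carefully, the $|T|=d-1$ sets cancel against part of the $|T|=d$ term and one is left with $\sum_{k=1}^d k\,a_k = \norm{\sum_{i=1}^d\gamma_i\bfe_i}_D$ times a constant, which a small check of the coefficient identity pins down to give $\sum_{k=1}^d k\,a_k = \sum_{i=1}^d\gamma_i$ (one can sanity-check this on the $\lambda=1$ case recorded after Theorem \ref{th:acdec}, where $a_1 = \sum_i\gamma_i$ and all higher $a_k$ vanish). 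Dividing by $a_0 = \norm{\sum_{i=1}^d\gamma_i\bfe_i}_D$ yields the claimed formula.

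For the range assertion $FI\in[1,d]$, I would argue as follows. Since $\norm{\cdot}_D$ is a norm with $\norm{\bfe_i}_D = 1$, the triangle inequality gives $\norm{\sum_{i=1}^d\gamma_i\bfe_i}_D \le \sum_{i=1}^d \gamma_i\norm{\bfe_i}_D = \sum_{i=1}^d\gamma_i$, so $FI\ge 1$. For the upper bound, one uses that every $D$-norm satisfies $\norm{\bfx}_D \ge \max_{1\le i\le d}|x_i| = \norm{\bfx}_\infty$ (this follows from $\norm{\bfe_i}_D=1$ together with the fact that a $D$-norm is monotone in $|x_i|$, or directly from the probabilistic generator representation alluded to in the introduction); hence $\norm{\sum_{i=1}^d\gamma_i\bfe_i}_D \ge \max_i\gamma_i \ge \frac{1}{d}\sum_{i=1}^d\gamma_i$, which gives $FI\le d$.

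The main obstacle is the bookkeeping in the first two paragraphs: one must carefully separate the $j=0$ boundary case (where the inner sum over $k$ omits the $k=0$ term, which is present in the defining formula for $a_k$ only through the convention, but $N_s = 0$ is excluded) from the generic $j\ge 1$ case, and then correctly identify which binomial identity applies for each value of $|T|$. Everything else is routine: the norm inequalities for the range are standard facts about $D$-norms. I would present the combinatorial reduction compactly, verify the coefficient pattern via the two alternating-sum identities, and then dispatch the bounds in two lines each.
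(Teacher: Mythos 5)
Your overall strategy (compute $FI=\sum_{k=1}^d kp_k=a_0^{-1}\sum_{k=1}^d ka_k$ from Lemma \ref{lem:unconditional_acdec}) can be made to work, but the central combinatorial step, as you describe it, is wrong. Interchanging the sums and writing $n=d-j=\abs T$, the coefficient of $\norm{\sum_{i\in T}\gamma_i\bfe_i}_D$ is $-\sum_{r=0}^{n}(-1)^r(r+j)\binom{n}{r}$, which by the two identities you quote vanishes for every $n\ge 2$ (including the boundary case $j=0$, $n=d$, when $d\ge2$) and equals $1$ exactly when $n=1$. So the terms that survive are the \emph{singletons} $\abs T=1$, each with coefficient $1$, giving directly $\sum_{k=1}^d ka_k=\sum_{i=1}^d\norm{\gamma_i\bfe_i}_D=\sum_{i=1}^d\gamma_i$. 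Your claimed pattern is the opposite (vanishing for $\abs T\le d-2$, coefficient $1$ at $\abs T=d-1$, a leftover $\abs T=d$ contribution), and the intermediate conclusion you draw from it --- that $\sum_k ka_k$ is a constant multiple of $\norm{\sum_{i=1}^d\gamma_i\bfe_i}_D$ which then ``pins down'' to $\sum_i\gamma_i$ --- cannot hold for a general $D$-norm: take $\norm\cdot_D=\norm\cdot_\infty$ with unequal $\gamma_i$, where no fixed multiple of $\max_i\gamma_i$ equals $\sum_i\gamma_i$. The $\lambda=1$ sanity check cannot detect this because for the $L_1$-norm all the set-indexed norms are additive; checking $d=2$ with the maximum norm already exposes the error. (Also, the identification $FI=\sum_k kp_k$ deserves the one-line remark that $N_s$ conditioned on $N_s>0$ takes values in the finite set $\set{1,\dots,d}$, so convergence of the conditional distribution gives convergence of the conditional mean.)

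Once corrected, your route is a legitimate alternative, but note that the paper's own proof bypasses the combinatorics entirely: by linearity, $E(N_s\mid N_s>0)=\sum_{i=1}^d P(X_i>s)/P(N_s>0)$, and then condition \eqref{cond:crucial_condition_on_tails} together with $\lim_{s\uparrow\omega^*}(1-P(N_s=0))/(1-F_\kappa(s))=a_0=\norm{\sum_{i=1}^d\gamma_i\bfe_i}_D$ from Lemma \ref{lem:unconditional_acdec} gives the formula in two lines; the same observation ($\sum_k kP(N_s=k)=E(N_s)=\sum_i P(X_i>s)$) is also the quickest way to see why the correct combinatorial identity must read $\sum_k ka_k=\sum_i\gamma_i$ with only singleton sets surviving. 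Your argument for the range $[1,d]$ is fine: the triangle inequality gives $\norm{\sum_i\gamma_i\bfe_i}_D\le\sum_i\gamma_i$, and the lower bound $\norm\cdot_D\ge\norm\cdot_\infty$ does need the $D$-norm structure (generator representation \eqref{eqn:representation_of_D-norm_via_generator}), not just $\norm{\bfe_i}_D=1$, which is exactly the inequality the paper cites from \cite{fahure10}.
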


\begin{proof}
We have \allowdisplaybreaks[4]
\begin{align*}
E(N_s\mid N_s>0)&=\sum_{i=1}^d E\left(1_{(s,\infty)}(X_i) \mid N_s>0\right)\\
&= \sum_{i=1}^d \frac{P(X_i>s)}{1-P(N_s=0)}\\
&= \sum_{i=1}^d \frac{1-F_i(s)}{1-F_\kappa(s)}\, \frac{1-F_\kappa(s)}{1-P(N_s=0)}\\
&\to_{s\to\infty} \frac{\sum_{i=1}^d\gamma_i}{\norm{\sum_{i=1}^d\gamma_i\bfe_i}_D}.
\end{align*}
by Lemma \ref{lem:unconditional_acdec} and condition \eqref{cond:crucial_condition_on_tails}.
\end{proof}

It is well known that an arbitrary $D$-norm satisfies the inequality $\norm{\bfx}_\infty\le \norm{\bfx}_D\le \norm{\bfx}_1$, $\bfx\ge\bfzero\in\R^d$; see, for example \cite[(4.37)]{fahure10}. The range of the FI in Theorem \ref{th:fragility_index} is, consequently, $[1,d]$.

Suppose that $\gamma_i>0$, $1\le i\le d$. Then it follows from \cite{ta88} that
\[
\norm{\sum_{i=1}^d\gamma_i\bfe_i}_D=\sum_{i=1}^d\gamma_i \iff\norm{\cdot}_D=\norm{\cdot}_1,
\]
where $\norm{\cdot}_D=\norm{\cdot}_1$ is the case of independence of the margins of $G$. We, thus, obtain in case $\gamma_i>0$, $1\le i\le d$,
\[
FI=1\iff\norm{\cdot}_D=\norm{\cdot}_1\iff \mbox{ independence of the margins of }G.
\]

In case of complete dependence of $G$, i.e., if $\norm{\bfx}_D
=\norm{\bfx}_\infty=\max_{1\le i\le d}\abs{x_i}$, we obtain for
general $\gamma_i\ge 0$ that $FI=\sum_{i=1}^d\gamma_i/\max_{1\le
i\le d}\gamma_i$.

\begin{exam}[Weighted Pareto]
Let $Y_1,\dots,Y_m$ be independent and identically Pareto
distributed rv with parameter $\alpha>0$. Put
$X_i:=\sum_{j=1}^m\lambda_{ij}Y_j$, $1\le i\le d,$ where the
weights $\lambda_{ij}$ are nonnegative and satisfy
$\sum_{j=1}^m\lambda_{ij}^\alpha=1$, $1\le i\le d$.

The df of the rv $\bfX=(X_1,\dots,X_d)$ is in the domain of attraction of the EVD
\[
G^*(\bfs)=\exp\left(-\sum_{j=1}^m\max_{i\le d}\left(\frac{\lambda_{ij}}{s_i}\right)^\alpha\right),\qquad \bfs=(s_1,\dots,s_d)>\bfzero,
\]
with standard Fr\'{e}chet margins
$G_k(\bfs)=\exp\left(-s^{-\alpha}\right)$, $s>0$, $1\le k\le d$.
This can be seen by proving that for $\bfs>\bfzero\in\R^d$
\[
P\left(\sum_{j=1}^m\lambda_{ij}Y_j\le n^{1/\alpha} s_i,\,1\le i\le
d\right) =1-\frac 1n\left(\sum_{j=1}^m\max_{i\le
d}\left(\frac{\lambda_{ij}}{s_i}\right)^\alpha+o(1)\right),
\]
which follows from tedious but elementary computations, using conditioning on $Y_j=y_j$, $j=2,\dots,m$.

We, thus, obtain that the copula pertaining to $\bfX$ is
in  the domain of attraction of
$G(\bfx)=\exp\left(-\norm{\bfx}_D\right)$,
$\bfx\le\bfzero\in\R^d$, where $\norm{\bfx}_D:=\sum_{j=1}^m
\left(\max_{i\le d}\left( \lambda_{ij}
^\alpha\abs{x_i}\right)\right)$, $\bfx\in\R^d$.

From \cite[Lemma A 3.26]{emkm97} we obtain that the df $F_i$ of
$X_i$ satisfies $1-F_i(s)\sim
s^{-\alpha}\sum_{j=1}^m\lambda_{ij}^\alpha=s^{-\alpha}$, $1\le
i\le d$, as $s\to\infty$ and, thus,
\[
\gamma_i=\lim_{s\to\infty}\frac{1-F_i(s)}{1-F_\kappa(s)}=1,\qquad 1\le i\le d,
\]
where $\kappa\in\set{1,\dots,d}$ can be chosen arbitrarily. As a consequence we obtain for the fragility index
\[
FI=\frac{\sum_{i=1}^d\gamma_i}{\norm{\sum_{i=1}^d \gamma_i\bfe_i}_D}
=\frac d{\sum_{j=1}^m \max_{i\le d}\lambda_{ij}^\alpha}.
\]
\end{exam}

\begin{exam}[GPD-Copula]\label{exam:gpd-copula}
Take an arbitrary rv $\bfZ$ that realizes in $[0,c]^d$ and which
satisfies $E(Z_i)=1$, $1\le i\le d$. Choose
$\beta_1,\dots,\beta_d>0$ and let $U$ be a rv, which is uniformly
on $(0,1)$ distributed and that is independent of $\bfZ$. Put
$\bfX:=(\beta_1Z_1,\dots,\beta_dZ_d)/U$. Then $F_i(x)=P(X_i\le
x)=1-\frac{\beta_i} x$, $x\ge c\beta_i$, $1\le i\le d$, and the
copula of $\bfX$ is in the domain of attraction of the EVD
$G(\bfx)=\exp(-\norm{\bfx}_D)$, $\bfx\le \bfzero\in\R^d$, with
$\norm{\bfx}_D=E\left(\max_{1\le i\le d}(\abs{x_i}Z_i)\right)$,
$\bfx\in\R^d$.

Let $\beta_\kappa=\max_{1\le i\le d}\beta_i$. Then we have
\[
\frac{1-F_i(s)}{1-F_\kappa(s)}=\frac{\beta_i}{\beta_\kappa}=:\gamma_i, \qquad s\ge c\beta_\kappa,\;1\le i\le d,
\]
and we obtain for the fragility index corresponding to $\bfX$
\[
FI=\frac{\sum_{i=1}^d\gamma_i}{E\left(\max_{1\le i\le d}\gamma_iZ_i\right)}.
\]
Note that the copula $C$ of $\bfX$ is actually a \textit{GPD
copula} ((multivariate)  generalized Pareto distribution),
characterized by the equation $C(\bfu)=1-\norm{\bm 1-\bfu}_D$ for
$\bfu\in[0,1]^d$ close to $\bm 1$, see \cite{aulbayfalk09}. If
$Z_1=\dots=Z_d$ a.s., then we obtain the maximum-norm
$\norm{\bfx}_D=\max_{1\le i\le d}\abs{x_i}$, and
$FI=\sum_{i=1}^d\gamma_i/\max_{1\le i\le d}\gamma_i$.
\end{exam}

\section{The Extended Fragility Index}\label{sec:extended_fragility_index}

The extended FI is the asymptotic expected number of exceedances
above a high threshold, conditional on the assumption that there
are at least $m\ge 1$ exceedances:
\[
FI(m):=\lim_{s\nearrow} E(N_s\mid N_s\ge m),\qquad 1\le m\le d.
\]
If the ACDEC corresponding to $X_1,\dots X_d$ exists, then, obviously,
\begin{equation}\label{eqn:extended_fragility_index_via_acdec}
FI(m)=\frac{\sum_{k=m}^d k p_k}{\sum_{k=m}^d p_k},\qquad 1\le m\le d.
\end{equation}
But now we encounter the problem that we might divide by $0$ in \eqref{eqn:extended_fragility_index_via_acdec}, i.e., $\sum_{k=m}^d p_k$ can vanish if $m\ge 2$. This is, for example, true for the $L_1$-norm. But there are other norms in dimension $d\ge 3$ such that $\sum_{k=m}^d p_k=0$, see \cite{falktichy10a}. In this section we establish a characterization of $\sum_{k=m}^d p_k=0$ also in that case, where the initial $X_1,\dots,X_d$ follow different distributions.

\begin{lem}
Assume the conditions of Corollary \ref{cor:expansion_of_P(X_k_le s)} and put $I:=
\{i\in\{1,\dots,d\}:\,\gamma_i=0\}$. Then we obtain $\sum_{k=m}^dp_k=0$ for $m>m^*:=\abs{I^\complement}=d-\abs I$.
\end{lem}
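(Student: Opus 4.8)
The plan is to reduce everything to the behaviour of the exceedance-count probabilities $a_k$ from Lemma \ref{lem:unconditional_acdec}. Since $p_k = a_k/a_0$ with $a_0>0$, the statement $\sum_{k=m}^d p_k = 0$ is equivalent to $a_k = 0$ for all $k \ge m$, and in turn (all $a_k\ge 0$) to showing that $a_k = 0$ whenever $k > m^* = |I^\complement|$. So the real goal is: if more than $m^*$ of the coordinates carry positive weight $\gamma$, the count cannot concentrate there asymptotically — intuitively, coordinates with $\gamma_i = 0$ exceed the threshold with asymptotically negligible probability, so at most $m^*$ coordinates can simultaneously exceed $s$ in the limit.

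First I would make that intuition precise at the level of probabilities rather than the limiting norms. For $i \in I$ we have $1 - F_i(s) = o(1 - F_\kappa(s)) = o(c)$ as $s \uparrow \omega^*$, by condition \eqref{cond:crucial_condition_on_tails}. Hence for any index set $S$ with $|S| = k > m^*$, the set $S$ must contain at least one index $i \in I$ (since $|I^\complement| = m^* < k$), and therefore
\[
P\left(X_j > s,\, j \in S\right) \le P(X_i > s) = 1 - F_i(s) = o(c).
\]
Summing over the $\binom{d}{k}$ sets $S$ of size $k$ gives $P(N_s = k) = o(c)$, hence $a_k = \lim_{s\uparrow\omega^*} P(N_s = k)/c = 0$ for every $k > m^*$.

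With $a_k = 0$ for $k > m^*$ established, the conclusion is immediate: for $m > m^*$ we have $\sum_{k=m}^d p_k = \sum_{k=m}^d a_k/a_0 = 0$, since every term in the sum has index $k \ge m > m^*$. Alternatively, one can reach the same conclusion purely from the closed form in Lemma \ref{lem:unconditional_acdec} by noting that $\|\sum_{i \in T}\gamma_i \bfe_i\|_D = \|\sum_{i \in T \cap I^\complement}\gamma_i \bfe_i\|_D$ depends only on $T \cap I^\complement$, and then checking that the alternating binomial sums defining $a_k$ telescope to zero once $k$ exceeds $|I^\complement|$; but the probabilistic argument above is shorter and avoids that combinatorial bookkeeping.

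I do not expect a serious obstacle here. The only point requiring a little care is the direction of the asymptotic comparison: one needs $1 - F_i(s) = o(1 - F_\kappa(s))$ for $i \in I$, which is exactly the content of $\gamma_i = 0$ in \eqref{cond:crucial_condition_on_tails}, together with the fact (noted after \eqref{cond:crucial_condition_on_tails}) that $\omega^* = \max_{i \le d}\omega(F_i)$ so that letting $s \uparrow \omega^*$ is meaningful for all margins simultaneously. The crude union bound $P(N_s = k) \le \sum_{|S|=k} P(X_j>s,\,j\in S)$ then does the rest; no appeal to the fine structure of $\norm{\cdot}_D$ is needed for this half of the characterization.
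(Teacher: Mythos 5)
Your proof is correct and follows essentially the same route as the paper: since any index set $S$ with $\abs S>m^*$ must contain some $i\in I$, the joint exceedance probability over $S$ is bounded by $1-F_i(s)=o(1-F_\kappa(s))$, forcing $a_k=0$ for $k>m^*$ and hence $\sum_{k=m}^dp_k=\sum_{k=m}^d a_k/a_0=0$. The only cosmetic difference is that you spell out the reduction $p_k=a_k/a_0$ explicitly, which the paper leaves implicit via Theorem \ref{th:acdec}.
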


\begin{proof}
Without loss of generality we can assume that $I\not=\emptyset$. Recall, moreover, that $\gamma_\kappa=1$, i.e., $I\not=\set{1,\dots,d}$ as well. We have
\[
a_k=\lim_{s\uparrow\omega^*}\frac{P(N_s=k)}{1-F_\kappa(s)}
=\lim_{s\uparrow\omega^*}\sum_{S\subset\set{1,\dots,d}\atop\abs
S=k} \frac{P\left(X_i>s,\,i\in S,\,X_j\le s,\,j\in
S^\complement\right)}{1-F_\kappa(s)}.
\]
If $\abs S=k\ge m^*+1$, then $S$ must contain an index $i_S$, say, with $i_S\in I$. We, thus, obtain for $k\ge m^*+1$
\[
a_k\le \limsup_{s\uparrow\omega^*}
\sum_{S\subset\set{1,\dots,d}\atop\abs S=k}
\frac{P(X_{i_S}>s)}{1-F_\kappa(s)} =
\sum_{S\subset\set{1,\dots,d}\atop\abs S=k}
\lim_{s\uparrow\omega^*} \frac{1-F_{i_S}(s)}{1-F_\kappa(s)}=0.
\]
\end{proof}

The following characterization is the main result of this section. It is formulated in terms of different representations of a multivariate EVD $G$ on $\R^d$ with standard negative exponential margins $G(x\bfe_i)=\exp(x)$, $x\le 0$, $1\le i\le d$. We have for $\bfx\le \bfzero\in\R^d$
\begin{align*}
G(\bfx)&=\exp\left(-\norm{\bfx}_D\right)\tag{Hofmann}\\
 &=\exp\left(-\int_{S_d}\max(-u_ix_i)\,\mu(d\bfu)\right)\tag{Pickands-de Haan-Resnick} \\
 &=\exp\left(-\nu\left([-\bfinfty,\bfx]^\complement\right)\right), \tag{Balkema-Resnick}
\end{align*}
where $\norm\cdot_D$ is some norm on $\R^d$ with $\norm{\bfe_i}_D=1$, $1\le i\le d$, $\mu$ is the \emph{angular measure} on the unit simplex
$S_d=\big\{\bfu\in[0,1]^d:\,\sum_{i\le d}u_i=1\big\}$, satisfying $\mu(S_d)=d$, $\int_{S_d}u_i\,\mu(d\bfu)=1$, $1\le i\le d$, and $\nu$ is the $\sigma$-finite \emph{exponent measure} on
$[-\infty,0]^d\backslash\set{\bfinfty}$; for details we refer to \cite{fahure10}. We also include the fact that each $D$-norm can be generated by nonnegative and bounded rv $Z_1,\dots,Z_d$ with $E(Z_i)=1$, $1\le i\le d$, as
\begin{equation}\label{eqn:representation_of_D-norm_via_generator}
\norm{\bfx}_D=E\left(\max_{1\le i\le d}(\abs{x_i}Z_i)\right),\qquad \bfx=(x_1,\dots,x_d)\in\R^d.
\end{equation}
This is a consequence of the Pickands-de Haan-Resnick
representation. The rv $\bfZ=(Z_1,\dots,Z_d)$  is called
\textit{generator} of $\norm\cdot_D$. Note that each rv
$\bfZ=(Z_1,\dots,Z_d)$ of nonnegative and bounded rv $Z_i$ with
$E(Z_i)=1$ generates a $D$-norm via equation
\eqref{eqn:representation_of_D-norm_via_generator}.

\begin{prop}
Assume the conditions of Corollary \ref{cor:expansion_of_P(X_k_le s)} and put $I=\{i\in\{1,\dots,d\}:\,\gamma_i=0\}$. Then we have $\sum_{k=m}^dp_k=0$ for some $m\le m^*=\abs{I^\complement}$ if and only if we have for each subset $K\subset I^\complement$ with at least $m$ elements\allowdisplaybreaks[4]
\begin{align}
&\lim_{s\uparrow \omega^*} \frac{P(X_k>s,\,k\in K)}{1-F_\kappa(s)}=0\label{prop:characterization_for_fi(m) via_sets}\\
&\iff \sum_{T\subset K}(-1)^{\abs{T}-1}\norm{\sum_{i\in T}x_i\bfe_i}_D=0\quad \mbox{for all } \bfx\ge\bfzero\in\R^d \nonumber\\
&\iff  \sum_{T\subset K}(-1)^{\abs{T}-1}\norm{\sum_{i\in T}\bfe_i}_D=0 \nonumber\\
&\iff  \min_{k\in K}Z_k=0\quad a.s.\label{prop:characterization_for_fi(m)_via_generator}\\
&\iff \mu\left(\set{\bfu\in S_d:\min_{i\in K}u_i>0}\right)=0 \nonumber\\
&\iff \nu\left(\times_{k\in K}(-\infty,0]  \times_{i\not\in K}[-\infty,0]\right)=0, \nonumber
\end{align}
 i.e., the projection $\nu_{K}:=\nu*(\pi_i,\,i\in K)$ of the exponent measure $\nu$ onto its components $i\in K$ is the null measure on $(-\infty,0]^{\abs K}$.
\end{prop}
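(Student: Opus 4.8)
The plan is to peel the statement into two layers: the \emph{outer} equivalence between $\sum_{k=m}^d p_k=0$ and the validity of the displayed condition for \emph{every} $K\subseteq I^\complement$ with $\abs K\ge m$, and the \emph{inner} chain of six equivalent reformulations for a single fixed $K$. Throughout $m$ is fixed with $1\le m\le m^*$, and I write $c:=1-F_\kappa(s)$. First I would reduce to the unconditional masses: since $a_0=\norm{\sum_j\gamma_j\bfe_j}_D\ge\norm{\bfe_\kappa}_D=1>0$ (using $\gamma_\kappa=1$) and each $a_k\ge0$, we have $\sum_{k=m}^d p_k=0$ iff $a_k=0$ for all $k\ge m$. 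Splitting $P(N_s=k)=\sum_{\abs S=k}P(X_i>s,\,i\in S,\,X_j\le s,\,j\in S^\complement)$ into its nonnegative summands and setting $c_S:=\lim_{s\uparrow\omega^*}P(X_i>s,\,i\in S,\,X_j\le s,\,j\in S^\complement)/c\ge0$, the condition $\sum_{k=m}^d p_k=0$ becomes simply $c_S=0$ for every $S$ with $\abs S\ge m$.

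The pivot for the outer equivalence is the disjoint decomposition $\set{X_k>s,\,k\in K}=\bigcup_{S\supseteq K}\set{X_i>s,\,i\in S,\,X_j\le s,\,j\in S^\complement}$, which after dividing by $c$ and letting $s\uparrow\omega^*$ yields $b_K:=\lim_{s\uparrow\omega^*}P(X_k>s,\,k\in K)/c=\sum_{S\supseteq K}c_S$, a sum of nonnegative terms. From this single identity the outer equivalence follows combinatorially. For the forward direction, if all $c_S=0$ with $\abs S\ge m$, then for any $K\subseteq I^\complement$ with $\abs K\ge m$ every $S\supseteq K$ satisfies $\abs S\ge m$, so $b_K=0$, which is the first line of the chain. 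For the reverse direction I exploit the set $I$: given $S$ with $\abs S\ge m$, either $S$ meets $I$, say $i\in S\cap I$, and then $c_S\le\lim_{s\uparrow\omega^*}P(X_i>s)/c=\gamma_i=0$ automatically, or $S\subseteq I^\complement$, and then $c_S\le b_S=0$ by hypothesis (take $K=S$); either way $c_S=0$, giving $\sum_{k=m}^d p_k=0$.

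For the inner chain with $K$ fixed, inclusion--exclusion on complements together with Corollary \ref{cor:expansion_of_P(X_k_le s)} gives $b_K=\sum_{\emptyset\not=T\subseteq K}(-1)^{\abs T-1}\norm{\sum_{i\in T}\gamma_i\bfe_i}_D$, identifying the first line with the second line evaluated at $\bfx=\bfgamma$ (and $\gamma_k>0$ for $k\in K$). The decisive ingredient is the max--min inclusion--exclusion identity $\sum_{\emptyset\not=T\subseteq K}(-1)^{\abs T-1}\max_{i\in T}y_i=\min_{i\in K}y_i$ for nonnegative reals $y_i$: substituting $y_i=\abs{x_i}Z_i$ into the generator representation \eqref{eqn:representation_of_D-norm_via_generator} turns the alternating norm sum into $E\!\left(\min_{i\in K}(\abs{x_i}Z_i)\right)$. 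Nonnegativity of the integrand shows this vanishes iff $\min_{i\in K}(\abs{x_i}Z_i)=0$ almost surely, and since the relevant weights are strictly positive at $\bfx=\bfgamma$ and at $\bfx=\bfone$, the first three lines are each equivalent to $\min_{i\in K}Z_i=0$ a.s., the fourth line. The remaining two lines I would obtain by transporting $\set{\min_{i\in K}Z_i=0}$ through the correspondence between generator, angular measure $\mu$ and exponent measure $\nu$ recalled before the statement: $\min_{i\in K}Z_i=0$ a.s.\ says no mass sits where all coordinates indexed by $K$ are strictly positive, which is precisely $\mu(\set{\bfu\in S_d:\min_{i\in K}u_i>0})=0$ and, dually, the nullity of the projected exponent measure $\nu_K$.

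I expect the genuinely load-bearing step to be the max--min identity feeding the generator representation, because it simultaneously forces nonnegativity of the alternating norm sum and collapses the three analytic conditions to the single almost-sure statement $\min_{i\in K}Z_i=0$; by contrast the outer combinatorial equivalence is routine once $b_K=\sum_{S\supseteq K}c_S$ is in hand, and the translation of the fourth line into the angular- and exponent-measure conditions is bookkeeping that I would quote from \cite{fahure10} rather than rederive.
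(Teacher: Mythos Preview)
Your proposal is correct and follows essentially the same route as the paper: the outer equivalence is handled combinatorially (the paper writes $\{N_s\ge m\}$ directly as the union of the events $\{X_k>s,\,k\in K\}$ over $\abs K\ge m$, whereas you unfold this into the disjoint pieces $c_S$ and reassemble via $b_K=\sum_{S\supseteq K}c_S$, which is the same argument in slightly finer granularity), and the inner chain is treated identically through the max--min inclusion--exclusion identity fed into the generator representation \eqref{eqn:representation_of_D-norm_via_generator}. The paper likewise quotes the remaining angular- and exponent-measure equivalences from the literature (specifically Proposition~5.2 of \cite{falktichy10a}).
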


While in the (bivariate) case $K=\set{k_1,k_2}$ the condition
\begin{align*}
&\sum_{T\subset K}(-1)^{\abs{T}-1}\norm{\sum_{i\in T}\bfe_i}_D=0\\
&\iff \norm{\bfe_{k_1}}_D + \norm{\bfe_{k_2}}_D - \norm{\bfe_{k_1}+\bfe_{k_2}}_D = 0\\
&\iff \norm{\bfe_{k_1}+\bfe_{k_2}}_D = 2 =\norm{\bfe_{k_1}+\bfe_{k_2}}_1
\end{align*}
implies by Takahashi's Theorem (\cite{ta88}) independence  of the
marginal distributions $k_1,k_2$ of the EVD
$G(\bfx)=\exp(-\norm{\bfx}_D)$, $\bfx\le \bfzero\in\R^d$, this is
no longer true for $\abs K\ge 3$. Take, for example, a rv $\xi$
that attains only the values 1;2;3 with probability 1/6; 1/3; 1/2
and put
\begin{equation*}
Z_1:=
\begin{cases}
0&\mbox{ if }\xi=1\\
\frac 65& \mbox{ elsewhere}
\end{cases},\quad
  Z_2:=\begin{cases}
0&\mbox{ if }\xi=2\\
\frac 32& \mbox{ elsewhere}
\end{cases}, \quad
Z_3:=\begin{cases}
0&\mbox{ if }\xi=3\\
 2& \mbox{ elsewhere}
\end{cases}.
\end{equation*}
Then $E(Z_i)=1$, $i=1,2,3$, $\min_{1\le i\le 3}Z_i=0$, $E(\max_{1\le i\le 3}Z_i)<3$ as well as $E(\max(Z_i,Z_j))<2$ for all $1\le i\not=j\le 3$, i.e., there is no marginal independence among $Z_1,Z_2,Z_3$.

\begin{proof}
We have by Theorem \ref{th:acdec} and Lemma \ref{lem:unconditional_acdec}
\begin{align*}
&\sum_{k=m}^d p_k=0\\
&\iff \lim_{s\uparrow\omega^*}\frac{P(N_s\ge m)}{1-F_\kappa(s)}=0\\
&\iff \lim_{s\uparrow\omega^*}\frac{P\left(\bigcup_{K\subset\set{1,\dots,d}\atop \abs K\ge m}\set{X_k>s,\,k\in K}\right)} {1-F_\kappa(s)}=0\\
&\iff \lim_{s\uparrow\omega^*}\frac{P(X_k>s,\,k\in K)} {1-F_\kappa(s)}=0\mbox{ for any }K\subset \set{1,\dots,d}\mbox{ with }\abs K\ge m\\
&\iff \lim_{s\uparrow\omega^*}\frac{P(X_k>s,\,k\in K)} {1-F_\kappa(s)}=0\mbox{ for any }K\subset I^\complement\mbox{ with }\abs K\ge m ,
\end{align*}
which is equivalence \eqref{prop:characterization_for_fi(m) via_sets}. Note that $\sum_{T\subset K}(-1)^{\abs T-1}\max_{i\in T}a_i=\min_{k\in K} a_k$ for any set $\set{a_k:\,k\in K}$ of real numbers, which can be seen by induction. We, consequently, have
\begin{equation*}
\sum_{T\subset K} (-1)^{\abs T-1}\norm{\sum_{i\in T}\bfe_i}_D
 = \sum_{T\subset K} (-1)^{\abs T-1}E\left(\max_{i\in T}Z_i\right)
 = E\left(\min_{i\in T}Z_i\right)
\end{equation*}
and, thus,
\[
\sum_{T\subset K} (-1)^{\abs T-1}\norm{\sum_{i\in T}\bfe_i}_D =0
\iff E\left(\min_{i\in T}Z_i\right)=0 \iff \min_{k\in K}Z_k=0 \;
a.s.
\]
The other equivalences follow from Proposition 5.2 in \cite{falktichy10a}.
\end{proof}

\section{Exceedance Cluster Lengths}\label{sec:sojourn_times}
The total number of sequential time points at which a stochastic process exceeds a high threshold is an exceedance cluster length.
The mathematical tools developed in the preceding sections enable the computation of its distribution as well. Precisely, denote by $L_\kappa(s)$ the number of sequential exceedances above the threshold $s$, if we have an exceedance at $\kappa\in\set{1,\dots,d}$, i.e.
\[
L_\kappa(s):=\sum_{k=0}^{d-\kappa} k 1\left(X_\kappa>s,\dots,X_{\kappa+k}>s, X_{\kappa+k+1}\le s\right).
\]
We have, in particular, $L_d(s)=0=L_\kappa(s)$, if
$X_{\kappa+1}\le s$. We suppose throughout this section that
condition \eqref{cond:crucial_condition_on_tails}  holds for the
index $\kappa\in\set{1,\dots,d}$. The following auxiliary result
will be crucial.

\begin{lem}\label{lem:sojourn_time}
Assume the conditions of Corollary \ref{cor:expansion_of_P(X_k_le s)}. Then we obtain for $\kappa\in\set{1,\dots,d}$ as $s\nearrow \omega^*$
\begin{align*}
P\left(L_\kappa(s)\ge k\mid X_\kappa>s\right) &= P\left(X_\kappa>s,\dots,X_{\kappa+k}>s \mid  X_\kappa>s\right)\\
&= \sum_{\emptyset\not=T\subset\set{\kappa,\dots,\kappa+k}} (-1)^{\abs T+1}\norm{\sum_{i\in T}\gamma_i\bfe_i}_D + o(1)\\
&=:s_\kappa(k)+o(1),\qquad 0\le k\le d-\kappa.
\end{align*}
\end{lem}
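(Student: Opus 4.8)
The plan is to prove the two claimed equalities in turn. The first equality is essentially a rewriting: the event $\{L_\kappa(s)\ge k\}$ is by definition the union over $j\ge k$ of the disjoint events $\{X_\kappa>s,\dots,X_{\kappa+j}>s,X_{\kappa+j+1}\le s\}$ (together with the boundary case $j=d-\kappa$ where there is no further constraint), and this union collapses to $\{X_\kappa>s,\dots,X_{\kappa+k}>s\}$ by telescoping. Dividing by $P(X_\kappa>s)$ gives the conditional form. This step is routine bookkeeping.

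For the second equality I would apply the inclusion--exclusion (additivity) formula to
\[
P\left(X_\kappa>s,\dots,X_{\kappa+k}>s\right)=\sum_{\emptyset\neq T\subset\set{\kappa,\dots,\kappa+k}}(-1)^{\abs T+1}\bigl(1-P(X_i\le s,\,i\in T)\bigr),
\]
exactly as in the proof of Lemma \ref{lem:unconditional_acdec}, where the constant term $\sum_{\emptyset\neq T}(-1)^{\abs T+1}=1$ cancels against the leading $1$ in $P(X_\kappa>s)$. Then I would plug in Corollary \ref{cor:expansion_of_P(X_k_le s)}: for each nonempty $T\subset\set{\kappa,\dots,\kappa+k}$,
\[
1-P(X_i\le s,\,i\in T)=(1-F_\kappa(s))\norm{\textstyle\sum_{i\in T}\gamma_i\bfe_i}_D+o(1-F_\kappa(s))
\]
as $s\uparrow\omega^*$. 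Since there are only finitely many such $T$, the $o$-terms combine into a single $o(1-F_\kappa(s))$. Finally, dividing numerator and denominator by $c:=1-F_\kappa(s)$ and noting $P(X_\kappa>s)/c\to\norm{\gamma_\kappa\bfe_\kappa}_D=\gamma_\kappa=1$ (recall $\gamma_\kappa=1$ and $\norm{\bfe_\kappa}_D=1$), one gets
\[
P(L_\kappa(s)\ge k\mid X_\kappa>s)=\frac{c\sum_{\emptyset\neq T\subset\set{\kappa,\dots,\kappa+k}}(-1)^{\abs T+1}\norm{\sum_{i\in T}\gamma_i\bfe_i}_D+o(c)}{c+o(c)},
\]
which tends to $s_\kappa(k)$, giving the claim with remainder $o(1)$.

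The main obstacle, such as it is, is purely notational rather than conceptual: one must be careful that the denominator $P(X_\kappa>s)$ does not itself vanish too fast relative to $c$ — but this is handled precisely because the index $\kappa$ is the reference index with $\gamma_\kappa=1$, so $P(X_\kappa>s)=c$ exactly (as $X_\kappa$ has df $F_\kappa$), and there is no asymptotic subtlety at all in the denominator. Thus the only real content is the inclusion--exclusion expansion plus a direct substitution of Corollary \ref{cor:expansion_of_P(X_k_le s)}, and the argument is a near-verbatim specialization of the computation already carried out in Lemma \ref{lem:unconditional_acdec}.
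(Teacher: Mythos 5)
Your proposal is correct and follows essentially the same route as the paper: inclusion--exclusion over the nonempty subsets $T\subset\set{\kappa,\dots,\kappa+k}$, substitution of Corollary \ref{cor:expansion_of_P(X_k_le s)}, cancellation of the constant terms via $\sum_{\emptyset\neq T}(-1)^{\abs T+1}=1$, and division by $1-F_\kappa(s)=P(X_\kappa>s)$. The only difference is that you spell out the (definitional) first equality and briefly comment on the harmless denominator issue, both of which the paper takes for granted.
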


\begin{proof}
From the additivity formula we obtain
\begin{align*}
&P\left(X_\kappa>s,\dots,X_{\kappa+k}>s \mid  X_\kappa>s\right)\\
&=\frac{1-P\left(\bigcup_{0\le i\le k}\set{X_{\kappa+i}\le s}\right)} {1-F_\kappa(s)}\\
&=\frac{1-\sum_{\emptyset\not=T\subset\set{\kappa,\dots,\kappa+k}}(-1)^{\abs T+1}P\left(X_i\le s,\, i \in T\right)}{1-F_\kappa(s)}\\
&=\frac{1-\sum_{\emptyset\not=T\subset\set{\kappa,\dots,\kappa+k}}(-1)^{\abs T+1} \left(1-c\norm{\sum_{i\in T}\gamma_i\bfe_i}_D\right)+ o(1-F_\kappa(s))} {1-F_\kappa(s)}\\
&=\sum_{\emptyset\not=T\subset\set{\kappa,\dots,\kappa+k}}(-1)^{\abs T+1} \norm{\sum_{i\in T}\gamma_i\bfe_i}_D + o(1).
\end{align*}
\end{proof}

\begin{cor}
Suppose in addition to the assumptions in Corollary \ref{cor:expansion_of_P(X_k_le s)} that $\bfZ$ is a generator of the $D$-norm $\norm{\cdot}_D$. Then we obtain for $\kappa\in\set{1,\dots,d}$ as $s\nearrow \omega^*$
\[
P(X_\kappa > s,\dots,X_{\kappa+k}>s\mid X_\kappa > s)= E\left(\min_{\kappa\le i\le \kappa+k} (\gamma_i Z_i)\right) + o(1),
\]
for $0\le k\le d-\kappa$.
\end{cor}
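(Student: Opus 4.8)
The plan is to combine the previous lemma with the generator representation of the $D$-norm from equation~\eqref{eqn:representation_of_D-norm_via_generator}. By Lemma~\ref{lem:sojourn_time} we already know that
\[
P(X_\kappa>s,\dots,X_{\kappa+k}>s\mid X_\kappa>s)=\sum_{\emptyset\not=T\subset\set{\kappa,\dots,\kappa+k}}(-1)^{\abs T+1}\norm{\sum_{i\in T}\gamma_i\bfe_i}_D+o(1),
\]
so it suffices to show that the inclusion--exclusion sum on the right equals $E\bigl(\min_{\kappa\le i\le\kappa+k}(\gamma_iZ_i)\bigr)$.

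First I would substitute the generator representation $\norm{\sum_{i\in T}\gamma_i\bfe_i}_D=E\bigl(\max_{i\in T}(\gamma_iZ_i)\bigr)$ into each term of the sum. By linearity of expectation the whole sum becomes
\[
E\left(\sum_{\emptyset\not=T\subset\set{\kappa,\dots,\kappa+k}}(-1)^{\abs T+1}\max_{i\in T}(\gamma_iZ_i)\right).
\]
Then I would invoke the elementary identity $\sum_{\emptyset\not=T\subset K}(-1)^{\abs T+1}\max_{i\in T}a_i=\min_{i\in K}a_i$ for any finite family of reals $\set{a_i:i\in K}$ — the very identity already used in the proof of the Proposition on the extended fragility index (provable by induction on $\abs K$, or by noting both sides integrate the indicator structure of the $a_i$ the same way). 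Applying it with $K=\set{\kappa,\dots,\kappa+k}$ and $a_i=\gamma_iZ_i$ gives $\min_{\kappa\le i\le\kappa+k}(\gamma_iZ_i)$ inside the expectation, which is the claimed expression.

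There is essentially no obstacle here: the only points needing care are (i) checking that the elementary max/min inclusion--exclusion identity applies pointwise to the random variables $\gamma_iZ_i$ (it does, since it is a purely algebraic fact about real numbers, applied $\omega$-by-$\omega$), and (ii) justifying the interchange of the finite sum with the expectation, which is immediate because the sum is finite and each $Z_i$ is bounded (hence integrable), so all expectations involved are finite. The $o(1)$ term is simply carried over unchanged from Lemma~\ref{lem:sojourn_time}. Thus the corollary follows in a few lines.
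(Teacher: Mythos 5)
Your proof is correct and follows essentially the same route the paper intends: apply Lemma \ref{lem:sojourn_time}, substitute the generator representation \eqref{eqn:representation_of_D-norm_via_generator} termwise, and use the inclusion--exclusion identity $\sum_{\emptyset\not=T\subset K}(-1)^{\abs T+1}\max_{i\in T}a_i=\min_{i\in K}a_i$ pointwise, exactly the identity the paper itself invokes in Section \ref{sec:extended_fragility_index}. Nothing is missing.
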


Though the distribution of a generator of a $D$-norm is not uniquely determined, the preceding result entails that the numbers $E\left(\min_{\kappa\le i\le \kappa+k} (\gamma_i Z_i)\right)$, $0\le k\le d-\kappa$, are uniquely determined by the $D$-norm.

The asymptotic distribution of the exceedance cluster length, conditional on the assumption that there is an exceedance at time point $\kappa\in\set{1,\dots,d}$, is an immediate consequence of Lemma \ref{lem:sojourn_time}. It follows from the equation
\[
P(L_\kappa(s)=k\mid X_\kappa > s)=P(L_\kappa(s)\ge k\mid X_\kappa
> s)- P(L_\kappa(s)\ge k+1\mid X_\kappa > s).
\]
Note, moreover, that $P(L_\kappa(s)=0\mid X_\kappa > s)=1$ for $\kappa=d$.

\begin{prop}\label{prop:expansion_of_exceedance_cluster_length}
Assume the conditions of Corollary \ref{cor:expansion_of_P(X_k_le s)}. Then we have for $\kappa < d$ as $s\nearrow\omega^*$
\begin{align*}
&P(L_\kappa(s)=k\mid X_\kappa > s)\\
&=\begin{cases}
\sum_{\emptyset\not= T\subset\set{\kappa,\dots,d}} (-1)^{\abs T+1}\norm{\sum_{i\in T}\gamma_i\bfe_i}_D + o(1), \\
\hspace*{7cm}k=d-\kappa,\\
\sum_{T\subset\set{\kappa,\dots,\kappa+k}}(-1)^{\abs T+1}\norm{\gamma_{\kappa+k+1}\bfe_{\kappa+k+1}+ \sum_{i\in T}\gamma_i\bfe_i}_D + o(1),\\
\hspace*{7cm} 0\le k< d-\kappa.
\end{cases}
\end{align*}
\end{prop}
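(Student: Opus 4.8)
The plan is to derive Proposition~\ref{prop:expansion_of_exceedance_cluster_length} from Lemma~\ref{lem:sojourn_time} by a single telescoping step. Recall that
\[
P(L_\kappa(s)=k\mid X_\kappa>s)=P(L_\kappa(s)\ge k\mid X_\kappa>s)-P(L_\kappa(s)\ge k+1\mid X_\kappa>s),
\]
valid for $0\le k<d-\kappa$, while for $k=d-\kappa$ the event $\set{L_\kappa(s)\ge k+1}$ is empty so that $P(L_\kappa(s)=d-\kappa\mid X_\kappa>s)=P(L_\kappa(s)\ge d-\kappa\mid X_\kappa>s)$. Feeding the asymptotic expansions $s_\kappa(k)+o(1)$ from Lemma~\ref{lem:sojourn_time} into both terms, the case $k=d-\kappa$ is immediate: it is exactly $s_\kappa(d-\kappa)=\sum_{\emptyset\ne T\subset\set{\kappa,\dots,d}}(-1)^{\abs T+1}\norm{\sum_{i\in T}\gamma_i\bfe_i}_D$, plus $o(1)$.

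For $0\le k<d-\kappa$ the work is to simplify $s_\kappa(k)-s_\kappa(k+1)$. Writing $M:=\set{\kappa,\dots,\kappa+k}$ and noting that $\set{\kappa,\dots,\kappa+k+1}=M\cup\set{\kappa+k+1}$, I would split the sum defining $s_\kappa(k+1)$ according to whether a subset $T\subset M\cup\set{\kappa+k+1}$ contains the index $\kappa+k+1$ or not. The subsets not containing $\kappa+k+1$ contribute precisely $s_\kappa(k)$ (including the empty set, which contributes $0$ in both sums and can be carried along freely), so it cancels against the $s_\kappa(k)$ term. What remains is
\[
s_\kappa(k)-s_\kappa(k+1)=-\sum_{T\subset M}(-1)^{\abs T+2}\norm{\gamma_{\kappa+k+1}\bfe_{\kappa+k+1}+\sum_{i\in T}\gamma_i\bfe_i}_D
=\sum_{T\subset M}(-1)^{\abs T+1}\norm{\gamma_{\kappa+k+1}\bfe_{\kappa+k+1}+\sum_{i\in T}\gamma_i\bfe_i}_D,
\]
where a subset of $M\cup\set{\kappa+k+1}$ containing $\kappa+k+1$ is written as $T\cup\set{\kappa+k+1}$ with $T\subset M$ and has cardinality $\abs T+1$, so it carries sign $(-1)^{\abs T+2}=(-1)^{\abs T}$, and the overall minus sign in front turns this into $(-1)^{\abs T+1}$. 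This is exactly the claimed expression for the case $0\le k<d-\kappa$, with the $o(1)$ terms of the two applications of Lemma~\ref{lem:sojourn_time} combining into a single $o(1)$.

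The only genuinely delicate bookkeeping is the handling of the empty set in the index sums and making sure the exponent arithmetic on the signs is correct; once the subsets of $\set{\kappa,\dots,\kappa+k+1}$ are partitioned by membership of the last index, everything is a one-line manipulation. I expect no analytic obstacle here at all: the asymptotics are entirely supplied by Lemma~\ref{lem:sojourn_time}, and the remaining content is the combinatorial identity above together with the trivial observation that for $k=d-\kappa$ there is no further term to subtract. Thus the main (and minor) obstacle is simply presenting the sign cancellation cleanly.
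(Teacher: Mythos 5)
Your proposal is correct and follows exactly the paper's route: the paper likewise obtains the proposition as an immediate consequence of Lemma \ref{lem:sojourn_time} via the difference $P(L_\kappa(s)=k\mid X_\kappa>s)=P(L_\kappa(s)\ge k\mid X_\kappa>s)-P(L_\kappa(s)\ge k+1\mid X_\kappa>s)$. Your explicit bookkeeping of the sign cancellation when splitting subsets of $\set{\kappa,\dots,\kappa+k+1}$ by membership of $\kappa+k+1$ is accurate and merely fills in the combinatorial step the paper leaves implicit.
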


We obtain, for example, for $\kappa < d$
\[
P(L_\kappa(s)=0\mid X_\kappa > s) = \norm{\bfe_\kappa+\gamma_{\kappa+1}\bfe_{\kappa+1}}_D - 1 + o(1),
\]
which converges to $\gamma_{\kappa+1}$ if $\norm\cdot_D=\norm\cdot_1$. Recall that $\gamma_\kappa=1$.

In terms of a generator $\bfZ$ of a $D$-norm, Proposition \ref{prop:expansion_of_exceedance_cluster_length} becomes the following result.

\begin{cor}
Assume in addition to the conditions of Corollary \ref{cor:expansion_of_P(X_k_le s)} that $\bfZ$ is a generator of the $D$-norm $\norm{\cdot}_D$. Then we have for $\kappa < d$ as $s\nearrow\omega^*$
\begin{itemize}
\item[(i)] $P(L_\kappa(s)= k\mid X_\kappa > s)$
\[
=\begin{cases}
E\left(\min_{\kappa\le i\le d}(\gamma_i Z_i)\right)+o(1),&k=d-\kappa\\
 E\left(\min_{\kappa\le i\le \kappa+k}(\gamma_i Z_i)- \min_{\kappa\le i\le \kappa + k+1}(\gamma_i Z_i)\right) + o(1), & 0\le k<d-\kappa.
\end{cases}
\]
\item[(ii)] $P(L_\kappa(s)\le k\mid X_\kappa > s)$
\[
=\begin{cases}
1, &k=d-\kappa\\
1- E\left(\min_{\kappa\le i\le \kappa+k+1}(\gamma_i Z_i)\right) + o(1),& 0\le k< d-\kappa.
\end{cases}
\]
\end{itemize}
\end{cor}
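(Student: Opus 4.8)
The plan is to derive this Corollary directly from Proposition \ref{prop:expansion_of_exceedance_cluster_length} together with the generator representation \eqref{eqn:representation_of_D-norm_via_generator}, exactly mirroring the passage from Lemma \ref{lem:sojourn_time} to the Corollary immediately following it. The only analytic identity needed is the inclusion--exclusion formula for minima of real numbers, namely
\[
\sum_{\emptyset\not= T\subset K}(-1)^{\abs T+1}\max_{i\in T}a_i=\min_{i\in K}a_i,
\]
valid for any finite index set $K$ and reals $a_i$, which is already invoked (and attributed to an easy induction) in the proof of the Proposition following \eqref{eqn:representation_of_D-norm_via_generator}. Applying \eqref{eqn:representation_of_D-norm_via_generator} with $\abs{x_i}=\gamma_i$, we have $\norm{\sum_{i\in T}\gamma_i\bfe_i}_D=E(\max_{i\in T}(\gamma_iZ_i))$, so by linearity of the expectation and the minimum identity,
\[
\sum_{\emptyset\not= T\subset K}(-1)^{\abs T+1}\norm{\sum_{i\in T}\gamma_i\bfe_i}_D
=E\!\left(\sum_{\emptyset\not= T\subset K}(-1)^{\abs T+1}\max_{i\in T}(\gamma_iZ_i)\right)
=E\!\left(\min_{i\in K}(\gamma_iZ_i)\right).
\]

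First I would treat case (i). For $k=d-\kappa$ the Proposition gives $P(L_\kappa(s)=k\mid X_\kappa>s)=\sum_{\emptyset\not=T\subset\set{\kappa,\dots,d}}(-1)^{\abs T+1}\norm{\sum_{i\in T}\gamma_i\bfe_i}_D+o(1)$, and the displayed identity with $K=\set{\kappa,\dots,d}$ turns this into $E(\min_{\kappa\le i\le d}(\gamma_iZ_i))+o(1)$. For $0\le k<d-\kappa$ the Proposition's sum ranges over all $T\subset\set{\kappa,\dots,\kappa+k}$, including $T=\emptyset$, with integrand $\norm{\gamma_{\kappa+k+1}\bfe_{\kappa+k+1}+\sum_{i\in T}\gamma_i\bfe_i}_D$; I would rewrite this by splitting off $T=\emptyset$ (which contributes $-\norm{\gamma_{\kappa+k+1}\bfe_{\kappa+k+1}}_D=-\gamma_{\kappa+k+1}=-E(\gamma_{\kappa+k+1}Z_{\kappa+k+1})$ since $\norm{\bfe_j}_D=1$ and $E(Z_j)=1$) and noting that for $\emptyset\not= T$ one has $\gamma_{\kappa+k+1}\bfe_{\kappa+k+1}+\sum_{i\in T}\gamma_i\bfe_i=\sum_{i\in T'}\gamma_i\bfe_i$ with $T'=T\cup\set{\kappa+k+1}$. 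Collecting, the sum becomes $-E(\gamma_{\kappa+k+1}Z_{\kappa+k+1})+\sum_{T'}(-1)^{\abs{T'}}E(\max_{i\in T'}(\gamma_iZ_i))$ over nonempty $T'\subset\set{\kappa,\dots,\kappa+k+1}$ containing $\kappa+k+1$. The cleanest route is instead to start from the telescoping identity $P(L_\kappa(s)=k\mid X_\kappa>s)=P(L_\kappa(s)\ge k\mid X_\kappa>s)-P(L_\kappa(s)\ge k+1\mid X_\kappa>s)$ already recorded before the Proposition, combine it with the Corollary to Lemma \ref{lem:sojourn_time} which states $P(X_\kappa>s,\dots,X_{\kappa+k}>s\mid X_\kappa>s)=E(\min_{\kappa\le i\le\kappa+k}(\gamma_iZ_i))+o(1)$, and subtract: this gives directly $E(\min_{\kappa\le i\le\kappa+k}(\gamma_iZ_i)-\min_{\kappa\le i\le\kappa+k+1}(\gamma_iZ_i))+o(1)$, which is (i) for $0\le k<d-\kappa$, and $E(\min_{\kappa\le i\le d}(\gamma_iZ_i))+o(1)$ for $k=d-\kappa$ since then $P(L_\kappa(s)\ge k+1\mid X_\kappa>s)=0$.

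Next I would treat case (ii) by summing (i): $P(L_\kappa(s)\le k\mid X_\kappa>s)=\sum_{j=0}^k P(L_\kappa(s)=j\mid X_\kappa>s)$. For $0\le k<d-\kappa$ every term is of the form $E(\min_{\kappa\le i\le\kappa+j}(\gamma_iZ_i)-\min_{\kappa\le i\le\kappa+j+1}(\gamma_iZ_i))+o(1)$, the finitely many error terms absorb into a single $o(1)$, and the sum telescopes to $E(\min_{\kappa\le i\le\kappa}(\gamma_iZ_i))-E(\min_{\kappa\le i\le\kappa+k+1}(\gamma_iZ_i))+o(1)$; since $\gamma_\kappa=1$ and $E(Z_\kappa)=1$, the first term equals $1$, giving the stated $1-E(\min_{\kappa\le i\le\kappa+k+1}(\gamma_iZ_i))+o(1)$. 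For $k=d-\kappa$ the full sum telescopes to $E(\gamma_\kappa Z_\kappa)+o(1)=1+o(1)$; but since $P(L_\kappa(s)\le d-\kappa\mid X_\kappa>s)$ is a probability that is always $\le 1$ and its limit is $1$, and in fact $L_\kappa(s)\le d-\kappa$ holds identically by definition of $L_\kappa(s)$, this is exactly $1$ with no error term. Alternatively one simply observes that $\set{L_\kappa(s)\le d-\kappa}$ is the sure event.

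There is no real obstacle here: this Corollary is a pure reformulation and every ingredient --- Proposition \ref{prop:expansion_of_exceedance_cluster_length}, the generator identity \eqref{eqn:representation_of_D-norm_via_generator}, the minimum inclusion--exclusion formula, and the telescoping relation between $P(L_\kappa(s)=k\mid\cdot)$ and $P(L_\kappa(s)\ge k\mid\cdot)$ --- is already in place. The one point deserving a line of care is the handling of the finitely many $o(1)$ remainders when summing over $j$ in part (ii): since $d$ is fixed, a finite sum of $o(1)$ terms is again $o(1)$, so this is harmless. The mildly delicate bookkeeping step is rewriting the $T=\emptyset$ contribution in the Proposition's second case and checking the sign, which is why I would prefer to bypass it entirely by working from the telescoping identity and the Corollary to Lemma \ref{lem:sojourn_time}, as sketched above; that route makes both parts essentially immediate.
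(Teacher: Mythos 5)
Your proposal is correct and follows essentially the paper's own route: the paper states this corollary as an immediate rewriting of Proposition \ref{prop:expansion_of_exceedance_cluster_length} (itself obtained by telescoping Lemma \ref{lem:sojourn_time}) using the generator representation \eqref{eqn:representation_of_D-norm_via_generator} and the inclusion--exclusion identity for minima, and your preferred variant merely telescopes after, rather than before, passing to the generator form, which is the same computation in a different order (and conveniently sidesteps the $T=\emptyset$ bookkeeping). All the details you check, including $\gamma_\kappa=1$, $E(Z_\kappa)=1$, the vanishing of $P(L_\kappa(s)\ge d-\kappa+1\mid X_\kappa>s)$, and the absorption of finitely many $o(1)$ terms, are handled correctly.
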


We, thus, obtain the limit distribution of the exceedance cluster length:
\begin{align*}
Q_\kappa([0,k])&:= \lim_{s\nearrow \omega^*} P(L_\kappa(s)\le k\mid X_\kappa > s)\\
&= \begin{cases}
1, &k=d-\kappa\\
1- E\left(\min_{\kappa\le i\le \kappa+k+1}(\gamma_i Z_i)\right),& 0\le k< d-\kappa.
\end{cases}
\end{align*}

Take, for example, the generator $\bfZ=2(U_1,\dots,U_d)$, where the $U_i$ are independent and uniformly on $(0,1)$ distributed rv. If, in addition, $\gamma_i=1$, $\kappa\le i\le d$, then we obtain
\[
Q_\kappa([0,k])= \begin{cases}
1,& k=d-\kappa\\
1-\frac 2{k+3},& 0\le k< d-\kappa.
\end{cases}
\]

Next we compute the asymptotic mean exceedance cluster length.

\begin{prop}\label{prop:sojourn_time}
Assume the conditions of Corollary \ref{cor:expansion_of_P(X_k_le s)} and let $\bfZ$ be a generator of the $D$-norm $\norm{\cdot}_D$. Then we have for $1\le \kappa\le d$
\begin{align*}
E\left(L_\kappa(s) \mid X_\kappa >s\right)
&=\begin{cases}
0,&\mbox{if }\kappa = d\\
\sum_{k=1}^{d-\kappa}s_\kappa(k) + o(1)&\mbox{else}
\end{cases}\\
&= \begin{cases}
0,&\mbox{if }\kappa = d\\
\sum_{k=1}^{d-\kappa} E\left(\min_{\kappa\le i\le \kappa+k}(\gamma_i Z_i)\right) + o(1) &\mbox{else}.
\end{cases}
\end{align*}
\end{prop}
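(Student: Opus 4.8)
The plan is to derive the formula for $E(L_\kappa(s)\mid X_\kappa>s)$ directly from Lemma \ref{lem:sojourn_time}. First I would dispose of the trivial case $\kappa=d$: by definition $L_d(s)=0$, so the conditional expectation is $0$. For $\kappa<d$, the random variable $L_\kappa(s)$ takes values in $\set{0,1,\dots,d-\kappa}$, so I would use the elementary tail-sum identity
\[
E\left(L_\kappa(s)\mid X_\kappa>s\right)=\sum_{k=1}^{d-\kappa}P\left(L_\kappa(s)\ge k\mid X_\kappa>s\right).
\]

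Next I would substitute the expansion from Lemma \ref{lem:sojourn_time}, namely $P(L_\kappa(s)\ge k\mid X_\kappa>s)=s_\kappa(k)+o(1)$ for each fixed $k$ with $0\le k\le d-\kappa$, to obtain
\[
E\left(L_\kappa(s)\mid X_\kappa>s\right)=\sum_{k=1}^{d-\kappa}\left(s_\kappa(k)+o(1)\right)=\sum_{k=1}^{d-\kappa}s_\kappa(k)+o(1),
\]
where the $o(1)$ terms may be collected into a single $o(1)$ since the sum is finite (the number of summands $d-\kappa$ does not depend on $s$). This gives the first displayed equality in the proposition. For the second equality, I would invoke the Corollary following Lemma \ref{lem:sojourn_time}, which identifies $s_\kappa(k)=\sum_{\emptyset\neq T\subset\set{\kappa,\dots,\kappa+k}}(-1)^{\abs T+1}\norm{\sum_{i\in T}\gamma_i\bfe_i}_D$ with $E\left(\min_{\kappa\le i\le\kappa+k}(\gamma_i Z_i)\right)$ whenever $\bfZ$ is a generator of $\norm\cdot_D$; substituting this term by term completes the proof.

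There is essentially no obstacle here: the argument is a one-line tail-sum expansion followed by two appeals to results already established in this section. The only point requiring a word of care is the passage from the pointwise-in-$k$ error term $o(1)$ to a uniform $o(1)$ for the whole sum, but this is immediate because the index set $\set{1,\dots,d-\kappa}$ is finite and independent of $s$. I would state this explicitly and then conclude.

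\begin{proof}
If $\kappa=d$ then $L_d(s)=0$ by definition, so $E(L_d(s)\mid X_d>s)=0$. Assume now $\kappa<d$. Since $L_\kappa(s)$ takes values in $\set{0,1,\dots,d-\kappa}$ we have
\[
E\left(L_\kappa(s)\mid X_\kappa>s\right)=\sum_{k=1}^{d-\kappa}P\left(L_\kappa(s)\ge k\mid X_\kappa>s\right).
\]
By Lemma \ref{lem:sojourn_time}, $P\left(L_\kappa(s)\ge k\mid X_\kappa>s\right)=s_\kappa(k)+o(1)$ as $s\nearrow\omega^*$, for each $k$ with $0\le k\le d-\kappa$. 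As the sum above is over the fixed finite index set $\set{1,\dots,d-\kappa}$, the individual error terms combine into a single $o(1)$, and we obtain
\[
E\left(L_\kappa(s)\mid X_\kappa>s\right)=\sum_{k=1}^{d-\kappa}s_\kappa(k)+o(1),\qquad s\nearrow\omega^*.
\]
Finally, by the Corollary to Lemma \ref{lem:sojourn_time}, if $\bfZ$ is a generator of the $D$-norm $\norm\cdot_D$ then $s_\kappa(k)=E\left(\min_{\kappa\le i\le\kappa+k}(\gamma_i Z_i)\right)$ for $0\le k\le d-\kappa$. Substituting this into the last display yields
\[
E\left(L_\kappa(s)\mid X_\kappa>s\right)=\sum_{k=1}^{d-\kappa}E\left(\min_{\kappa\le i\le\kappa+k}(\gamma_i Z_i)\right)+o(1),\qquad s\nearrow\omega^*,
\]
which completes the proof.
\end{proof}
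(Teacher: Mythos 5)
Your proof is correct and follows essentially the same route as the paper: the tail-sum formula for the nonnegative integer-valued $L_\kappa(s)$ (the paper writes it as $\int_0^\infty P(L_\kappa(s)\ge t\mid X_\kappa>s)\,dt$, which reduces to your discrete sum), then Lemma \ref{lem:sojourn_time} termwise with the finitely many $o(1)$ errors absorbed, and the generator identification $s_\kappa(k)=E\bigl(\min_{\kappa\le i\le\kappa+k}(\gamma_iZ_i)\bigr)$ from the corollary. Your explicit handling of the trivial case $\kappa=d$ and of collecting the error terms is a harmless addition of detail.
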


\begin{proof}
Since $L_\kappa(s)$ attains only nonnegative values, we have for $\kappa<d$
\begin{align*}
E\left(L_\kappa(s) \mid X_\kappa >s\right)
&=\int_0^\infty P\left(L_\kappa(s)\ge t\mid X_\kappa >s\right)\,dt\\
&=\sum_{k=1}^{d-\kappa} P\left(L_\kappa(s)\ge k\mid X_\kappa >s\right)\\
&=\sum_{k=1}^{d-\kappa} P\left(X_\kappa>s,\dots,X_{\kappa+k}>s \mid  X_\kappa>s\right)\\
&=\sum_{k=1}^{d-\kappa}s_\kappa(k) + o(1).
\end{align*}
 \end{proof}

\begin{cor}\label{coro:mean_exceedance_cluster_length_for_L1-norm}
Under the conditions of the preceding result we have for $\kappa<d$, if $\gamma_k>0$, $1\le k\le d$´,
\[
\lim_{s\uparrow \omega^*}E(L_\kappa(s)\mid X_\kappa>s)=0
\]
if and only if $\norm{x\bfe_\kappa + y\bfe_{\kappa+1}}_D=
\norm{x\bfe_\kappa + y\bfe_{\kappa+1}}_1=x+y$, $x,y\ge 0$.
\end{cor}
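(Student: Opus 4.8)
The plan is to combine Proposition~\ref{prop:sojourn_time} with the monotonicity properties of a $D$-norm so as to reduce the statement to the case $k=1$ of Lemma~\ref{lem:sojourn_time}. Write $t_\kappa(k):=E\big(\min_{\kappa\le i\le\kappa+k}(\gamma_iZ_i)\big)=s_\kappa(k)$ for $1\le k\le d-\kappa$, the limiting summands in Proposition~\ref{prop:sojourn_time}. Since $\min_{\kappa\le i\le\kappa+k}(\gamma_iZ_i)\ge 0$ a.s.\ for every $k$, each $t_\kappa(k)\ge 0$, and hence the limit of $E(L_\kappa(s)\mid X_\kappa>s)=\sum_{k=1}^{d-\kappa}t_\kappa(k)+o(1)$ is zero if and only if $t_\kappa(k)=0$ for all $1\le k\le d-\kappa$. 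The next observation is that the sequence $k\mapsto\min_{\kappa\le i\le\kappa+k}(\gamma_iZ_i)$ is nonincreasing in $k$, so $t_\kappa(1)\ge t_\kappa(2)\ge\dots\ge t_\kappa(d-\kappa)\ge 0$; therefore the whole sum vanishes in the limit if and only if its first term $t_\kappa(1)$ vanishes, i.e.\ if and only if $E\big(\min(\gamma_\kappa Z_\kappa,\gamma_{\kappa+1}Z_{\kappa+1})\big)=0$.

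Next I would translate the condition $t_\kappa(1)=0$ back into norm language. By Lemma~\ref{lem:sojourn_time} with $k=1$, or directly from the generator representation~\eqref{eqn:representation_of_D-norm_via_generator},
\[
t_\kappa(1)=E\big(\min(\gamma_\kappa Z_\kappa,\gamma_{\kappa+1}Z_{\kappa+1})\big)
=\gamma_\kappa+\gamma_{\kappa+1}-\norm{\gamma_\kappa\bfe_\kappa+\gamma_{\kappa+1}\bfe_{\kappa+1}}_D,
\]
using $E(\min(a,b))=E(a)+E(b)-E(\max(a,b))$ together with $E(\gamma_\kappa Z_\kappa)=\gamma_\kappa$, $E(\gamma_{\kappa+1}Z_{\kappa+1})=\gamma_{\kappa+1}$ and $\norm{\gamma_\kappa\bfe_\kappa+\gamma_{\kappa+1}\bfe_{\kappa+1}}_D=E\big(\max(\gamma_\kappa Z_\kappa,\gamma_{\kappa+1}Z_{\kappa+1})\big)$. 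Thus $t_\kappa(1)=0$ is exactly $\norm{\gamma_\kappa\bfe_\kappa+\gamma_{\kappa+1}\bfe_{\kappa+1}}_D=\gamma_\kappa+\gamma_{\kappa+1}$. Recall $\gamma_\kappa=1$, and note that under the standing hypothesis of the corollary, $\gamma_k>0$ for all $k$, so in particular $\gamma_{\kappa+1}>0$.

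Finally I would upgrade the single equality $\norm{\gamma_\kappa\bfe_\kappa+\gamma_{\kappa+1}\bfe_{\kappa+1}}_D=\gamma_\kappa+\gamma_{\kappa+1}$ to the full statement $\norm{x\bfe_\kappa+y\bfe_{\kappa+1}}_D=x+y$ for all $x,y\ge 0$. One direction is trivial: $\norm{x\bfe_\kappa+y\bfe_{\kappa+1}}_D=x+y$ for all $x,y\ge 0$ gives the special value at $x=\gamma_\kappa$, $y=\gamma_{\kappa+1}$. For the converse, I would apply Takahashi's theorem (\cite{ta88}) to the two-dimensional $D$-norm obtained by restricting $\norm\cdot_D$ to the coordinates $\kappa,\kappa+1$: the equality at the single point $(\gamma_\kappa,\gamma_{\kappa+1})$ with both coordinates strictly positive forces this bivariate $D$-norm to be the $\ell_1$-norm, which is the content of the Takahashi characterization already invoked after Theorem~\ref{th:fragility_index} and in the discussion following the Proposition in Section~\ref{sec:extended_fragility_index}. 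Equivalently, in generator terms, $E\big(\min(\gamma_\kappa Z_\kappa,\gamma_{\kappa+1}Z_{\kappa+1})\big)=0$ with $\gamma_\kappa,\gamma_{\kappa+1}>0$ forces $\min(Z_\kappa,Z_{\kappa+1})=0$ a.s., hence $E\big(\min(xZ_\kappa,yZ_{\kappa+1})\big)=0$ and $\norm{x\bfe_\kappa+y\bfe_{\kappa+1}}_D=xE(Z_\kappa)+yE(Z_{\kappa+1})=x+y$ for all $x,y\ge 0$. The main obstacle is the correct invocation of Takahashi's theorem: one must be sure that equality of a $D$-norm with $\norm\cdot_1$ at a single interior point (not on a coordinate axis) already propagates to all of the positive orthant, which is precisely why the hypothesis $\gamma_k>0$ cannot be dropped; all remaining steps are routine bookkeeping with the generator representation.
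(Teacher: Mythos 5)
Your proof is correct and takes essentially the same route as the paper: it reduces, via the nonnegativity and monotonicity $s_\kappa(1)\ge\dots\ge s_\kappa(d-\kappa)\ge 0$ in Proposition \ref{prop:sojourn_time}, to the single condition $s_\kappa(1)=0$, and then identifies this with the bivariate $L_1$-behaviour of $\norm\cdot_D$ in the coordinates $\kappa,\kappa+1$. The only difference is that the paper cites Proposition 6.1 of \cite{falktichy10a} for this last equivalence, while you verify it directly through the generator representation (equivalently Takahashi's theorem), using $\gamma_\kappa=1$, $\gamma_{\kappa+1}>0$ to pass from $E\left(\min(\gamma_\kappa Z_\kappa,\gamma_{\kappa+1}Z_{\kappa+1})\right)=0$ to $\min(Z_\kappa,Z_{\kappa+1})=0$ a.s.\ and hence to $\norm{x\bfe_\kappa+y\bfe_{\kappa+1}}_D=x+y$ for all $x,y\ge 0$, which is a correct and self-contained substitute.
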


\begin{proof}
Note that $s_\kappa(1)\ge\dots\ge s_\kappa(d-\kappa)$. We, thus, obtain from Proposition \ref{prop:sojourn_time}
\[
\lim_{s\uparrow \omega^*}E(L_\kappa(s)\mid X_\kappa>s)=0 \iff s_\kappa(1)=0.
\]
The assertion is now a consequence of Proposition 6.1 in \cite{falktichy10a}.
\end{proof}

Suppose in addition to the assumptions of Corollary
\ref{cor:expansion_of_P(X_k_le s)} that the components
$X_1,\dots,X_d$ of the rv $\bfX$ are exchangeable. Then we have
$\gamma_1=\dots=\gamma_d=1$, as well as $\norm{\sum_{i\in
T}\bfe_i}_D= \norm{\sum_{i=1}^{\abs{T}}\bfe_i}_D$ for any nonempty
subset $T\subset\set{1,\dots,d}$. As a consequence we obtain
\[
s_\kappa(k)=\sum_{j=1}^{k+1}(-1)^{j+1}\binom{k+1}{j} \norm{\sum_{i=1}^j\bfe_i}_D,\qquad 0\le k\le d-\kappa,
\]
and, thus, by rearranging sums,
\begin{align}\label{eqn:asymptotic_expectation_of_sojourn_time}
\lim_{s\nearrow} E\left(L_\kappa(s)\mid X_\kappa>s\right)&=\sum_{k=1}^{d-\kappa}s_\kappa(k)\nonumber\\
&=\sum_{j=1}^{d-\kappa+1} (-1)^{j+1}\norm{\sum_{i=1}^j\bfe_i}_D\sum_{k=\max(1,j-1)}^{d-\kappa}\binom{k+1}{j}\nonumber\\
&=-1+ \sum_{j=1}^{d-\kappa+1} (-1)^{j+1} \binom{d-\kappa+2}{j+1}\norm{\sum_{i=1}^j\bfe_i}_D,
\end{align}
where the final equality follows from the general equation $\sum_{r=n}^N\binom r n =\binom{N+1}{n+1}$.

\begin{exam}[Marshall-Olkin $D$-norm]\label{exam:Marshall-Olkin_D-norm}
The Marshall-Olkin $D$-norm is the convex combination of the maximum-norm and the $L_1$-norm:
\[
\norm{\bfx}_{\mathrm{MO}}=\vartheta\norm{\bfx}_1+(1-\vartheta)\norm{\bfx}_\infty,\qquad \bfx\in\R^d,\,\vartheta\in[0,1],
\]
see \cite[Example 4.3.4]{fahure10}. In this case we obtain from equation \eqref{eqn:asymptotic_expectation_of_sojourn_time}
\begin{equation*}
\lim_{s\nearrow} E\left(L_\kappa(s)\mid X_\kappa>s\right)=(1-\vartheta)(d-\kappa),
\end{equation*}
where we have used the general equation $\sum_{j=0}^m(-1)^j\binom m j=0$.

In the case $\vartheta=0$ of complete tail dependence of the
margins we, therefore, obtain $\lim_{s\nearrow}
E\left(L_\kappa(s)\mid X_\kappa>s\right)= d-\kappa,$ which is the
full possible length, whereas in the tail independence case
$\vartheta=1$ we obtain the shortest length $\lim_{s\nearrow}
E\left(L_\kappa(s)\mid X_\kappa>s\right)$ $=0$, which is in complete
accordance with Corollary
\ref{coro:mean_exceedance_cluster_length_for_L1-norm}.
\end{exam}

\end{document}